\DeclareMathOperator*{\argmin}{arg\,min}
\newcommand*{\addFileDependency}[1]{%
  \typeout{(#1)}%
  \@addtofilelist{#1}%
  \IfFileExists{#1}{}{\typeout{No file #1.}}%
}
\def\blambda#1{\boldsymbol{\Lambda}_{#1}}
\def\boldtheta#1{\boldsymbol{\theta}^{#1}}
\def\boldx#1{\boldsymbol{x}#1}
\def\tildeeps{\tilde{\varepsilon}}
\def\dss{\displaystyle}
\newtheorem{thm}{Theorem}[section]
\newtheorem{lem}[thm]{Lemma}
\newtheorem{rem}[thm]{Remark}
\newtheorem{proposition}[thm]{Proposition}
\newenvironment{thmrestate}[1]{%
  \begingroup
  \def\@currentlabel{\ref{#1}}%
  \begin{thm}}
  {\end{thm}\endgroup}
\title{Analytical and numerical methods for spillover effects in prioritized PrEP for HIV prevention%
\thanks{Submitted to the editors DATE.
This work was funded in part by NSF grant no.~DMS-2052363 (transferred to DMS-2330801).
CP acknowledges the support of the Alexander von Humboldt foundation.}%
}
\author{
    Chiara Piazzola$^{a}$,
    Alex Viguerie$^{b,\ast}$, 
    Salman Safdar$^{c,d}$ and
    Abba B. Gumel$^{d,e,f}$
}
\date{}
\newcommand{\affil}[2]{$^{#1}$ #2}
\begin{document}

\maketitle
\vspace{-15pt}
\noindent
{\it {\small \affil{a}{Department of Mathematics, Technical University of Munich, 85748 Garching bei München, Germany}}}

{\it {\small \affil{b}{Department of Pure and Applied Sciences, Università degli Studi di Urbino Carlo Bo, Urbino, PU 60129, Italy}}}

{\it {\small \affil{c}{Department of Mathematics, University of Karachi, University Road 75270, Pakistan}}}

{\it {\small \affil{d}{Department of Mathematics, University of Maryland, College Park, MD, 20742, USA}}}

{\it{\small \affil{e}{Institute for Health Computing, University of Maryland, North Bethesda, Maryland, 20852, USA}}}

{\it{\small \affil{f}{Department of Mathematics and Applied Mathematics, University of Pretoria, Pretoria 0002, South Africa}}}

\maketitle

\begin{abstract}
Pre-exposure prophylaxis (PrEP) is a highly effective intervention for preventing HIV transmission, but its high cost and uneven uptake raise key challenges for allocating resources efficiently. While \textit{spillover effects} - wherein PrEP use in one group reduces infections in others - are known to occur, they remain poorly quantified and rarely guide policy. We provide a comprehensive modeling study, backed by data, for PrEP spillover effects in HIV risk populations, and develop both analytic and numerical tools for its quantification. Specifically, we first develop a novel compartmental model for HIV transmission that stratifies the total population into four interacting subpopulations: heterosexual males (HETM), high-risk heterosexual females (HETF-hi), low-risk heterosexual females (HETF-lo) and men who have sex with men (MSM). The asymptotic stability of the disease-free equilibrium of the model is analyzed. The spillover effect is directly quantified for this model by deriving an expression for the \textit{spillover-adjusted number needed to treat} (NNT), a measure of the population-level impact of PrEP uptake in one group on disease incidence in others. Simulations show that PrEP delivery to MSM yields substantial indirect benefits, particularly for HETF-lo, where the spillover effect exceeds the direct effect by a factor of five. Furthermore, we show that targeting HETF-hi outperforms direct PrEP delivery to HETM, emphasizing the importance of intra-group heterogeneity. To evaluate whether these results hold under more detailed assumptions, we embed our framework into the national \textit{HOPE model} maintained by the Centers for Disease Control and Prevention (CDC) and conduct global sensitivity analysis using Sobol indices with Polynomial Chaos Expansion. This approach extends our analytical insights and quantifies how uncertainty in PrEP allocation strategies propagates through complex epidemic dynamics. Further, this framework provides a numerical procedure for quantifying spillover effects in settings where direct mathematical analysis is impractical (or impossible).  Our results demonstrate that spillover effects are a central driver of PrEP dynamics and that failing to account for them risks mis-allocating of control resources. This study provide both analytic and numerical methods for realistically quantifying PrEP spillover effects across models of differing complexity, bridging the gap between interpretable theoretical insights and high-dimensional national-scale simulations.\end{abstract}

% keywords can be removed
\keywords{  HIV \and spillover\and pre-exposure prophylaxis (PrEP) \and sensitivity analysis \and Sobol indices \and sparse grids
}

\section{Introduction}
Since its approval by the United States Food and Drug Administration (FDA) in 2012, pre-exposure prophylaxis (PrEP) has emerged as a critical tool in the fight against HIV \cite{fdaPrEPApproval}. Studies have demonstrated that consistent PrEP use  can reduce the risk of acquiring HIV infection from sexual contact by up to 99\% \cite{mccormack2016pre, grant2010preexposure, grant2014uptake, liu2016preexposure, marcus2017redefining, volk2015no}. Due to its high efficacy, expanding PrEP coverage is a core component of the Ending the HIV Epidemic (EHE) initiative, which aims to reduce annual HIV incidence in the United States by 90\% by 2030 \cite{fauci2019ending}. 

\par Unfortunately, PrEP is expensive: brand name versions may cost as much as \$22,000 per patient-year \cite{schmid2022us}. Indeed, analyses have shown that PrEP may not always be cost-effective \cite{khurana2018impact, viguerie2024InputOutput, sansom2021optimal}. Ensuring that PrEP is allocated in a manner such that its benefits in HIV incidence reduction are maximized, while minimizing costs, is therefore a subject of interest in HIV prevention efforts.

\par Modeling studies have shown that PrEP uptake often results in \textit{spillover effects}, wherein increased PrEP use in one group results in reduced HIV incidence in a separate group, and that such effects may be substantial \cite{hamilton2023achieving, khurana2018impact, viguerie2024InputOutput}. Interventions leveraging spillover effects, therefore, offer significant potential in reducing HIV incidence at lower cost. To the authors' knowledge, no modeling study has addressed this issue specifically. In some cases, the authors explicitly noted these effects \cite{khurana2018impact}; in other instances, spillover effects were apparent from the results, but were not discussed \cite{hamilton2023achieving}. A possible reason why this issue has not received further attention is that the direct study of spillover effects is not straightforward. 

\par PrEP spillover results from \textit{population mixing}. Although analysis of population mixing in infectious disease models has received attention \cite{pugliese1991contact, glasser2012mixing, hill2023implications, feng2015elaboration, jacquez1988modeling, elbasha2021vaccination,pant2024mathematical,prem2017projecting}, the most commonly-employed techniques in these analyses generally rely on a linearization of the mathematical model about an equilibrium at a fixed point in time. These approaches are not straightforward for studying PrEP spillover effects, which can be nonlinear and time-dependent \cite{khurana2018impact}. Furthermore, the contact patterns for HIV transmission are distinct from the more well-understood preferential mixing models, which are more amenable to standard linearization techniques \cite{pugliese1991contact, jacquez1988modeling, feng2015elaboration}.

\par The current study aims to develop a mathematical framework for realistically quantifying the impact of PrEP spillover effects on the control of HIV spread in a population. To this end, we provide both analytic approaches, suitable for theory-driven work, as well as practical, numerical methods suitable for large-scale, policy-relevant models. For the former, we construct a simplified compartmental model that stratifies the total population into four subgroups: heterosexual males, high-risk heterosexual females, low-risk heterosexual females and men who have sex with men. This simplified model is ideal for studying PrEP spillover, as we are able to clearly separate the effects of PrEP from other, downstream effects present in more complex HIV transmission models. We show that spillover may be quantified directly, and furthermore, this effect appears directly in the derivation of a closed-form expression for spillover-aware {\it numbers needed to treat} (NNTs) in each subgroup, a commonly used metric to quantify PrEP effectiveness \cite{buchbinder2014should, okwundu2012antiretroviral, elion2019estimated,kourtis2025estimating}; in other words, we directly demonstrate how our analysis can be immediately connected to, and used in, PrEP intervention policy. For practical settings in which model complexity makes direct mathematical analysis difficult (or impossible), we introduce numerical methods to quantify spillover effects. We employ the HOPE model - a large-scale, US-level compartmental model of HIV transmission developed by the Centers for Disease Control and Prevention (CDC) \cite{chen2022estimating, sansom2021optimal, viguerie2023assessing, viguerie2022impact} - and show that polynomial chaos expansion (PCE)-based Sobol sensitivity analysis \cite{sudret2008global, piazzola2021note} can be used to study intergroup spillover effects.

The paper is organized as follows. We introduce a novel, analytically-tractable model in Section~\ref{Section2}; we analyze its stability and well-posedness and perform a formal analysis of spillover effects. Simulations are performed for the US state of Georgia, and the results are presented and analyzed in detail. In Section~\ref{Section4}, we address numerical approaches, in particular PCE-based Sobol analysis, for studying spillover effects, and present and discuss numerical examples using the HOPE model. Finally, in Section~\ref{Section5} we summarize our findings, contextualize them within the broader HIV modeling literature, and provide conclusions and directions for future work.

\section{Model for sexual transmission of HIV}
\label{Section2}
Mathematical modeling of HIV transmission is complex, due to significant post-infection factors, including testing, treatment adherence, and differential infectivity, which must be considered in general \cite{viguerie2023assessing,hamilton2023achieving,chen2022estimating, khurana2018impact}. These factors are interdependent - testing influences awareness, which in turn influences transmission behavior, etc.- and isolating the effects of each component is challenging. However, in the present, we focus on the independent effects of PrEP, which, unlike the post-infection care continuum, acts on the \textit{susceptible} population. This allows us to consider a simplified structure consisting of a susceptible compartment $S$ and a single infected compartment $I$, which represents the average of the entire population of persons with HIV (PWH). %This simpler model offers advantages in terms of model identifiability \cite{piazzola2021note}. 
For simplicity, we assume PrEP is 100\% effective in preventing infection and that the PrEP users are perfectly adherent. Furthermore, we focus on sexual route for HIV transmission, which accounts for 93\% of new infections in the U.S. \cite{ATLAS}. 

We assume the total population at time $t$, denoted by $N(t)$, is subdivided into the following subgroups: \textit{Men who have sex with men} (MSM, denoted with subscript $msm$), \textit{Heterosexual females} (HETF, subscript $hetf$), and \textit{Heterosexual males} (HETM, subscript $hetm$). Furthermore, we observe that certain sub-populations within HETF have significantly elevated risk of acquiring HIV infection compared to the overall HETF population \cite{kourtis2025estimating}. Consequently, one may expect  PrEP interventions prioritizing higher-risk individuals to be more effective. In fact, CDC PrEP eligibility guidelines are offered to aid medical professionals in identifying persons with elevated risk \cite{centers2018preexposure, kourtis2025estimating, zhu2025trends}. Accordingly, the HETF population is further subdivided into high- and low-risk subgroups (HETF-hi and HETF-lo, respectively), denoted $hetf_h$ and $hetf_{l}$ respectively, giving four total transmission subgroups.
\par Let the total population of the MSM, HETF-hi, HETF-lo and HETM subgroups at time $t$ be denoted by $N_{msm}(t)$, $N_{hetf_h}(t)$, $N_{hetf_l}(t)$ and $N_{hetm}(t)$, respectively, with the population in each subgroup further divided into susceptible $S(t)$ and infected $I(t)$ individuals. The total population at $t$ is then given by:
\begin{equation}\begin{split}    
    N(t) = N_{msm}(t)+N_{hetf_h}(t)+N_{hetf_l}(t)+N_{hetm}(t), \\
    N_{j}(t) = S_j(t) + I_j (t),\, {\rm with}\,\,j\in\{msm,\,hetf_h,\,hetf_l,\,hetm\}.
\end{split}    
\end{equation}
\begin{figure}
    \centering
\includegraphics[width=.6\linewidth]{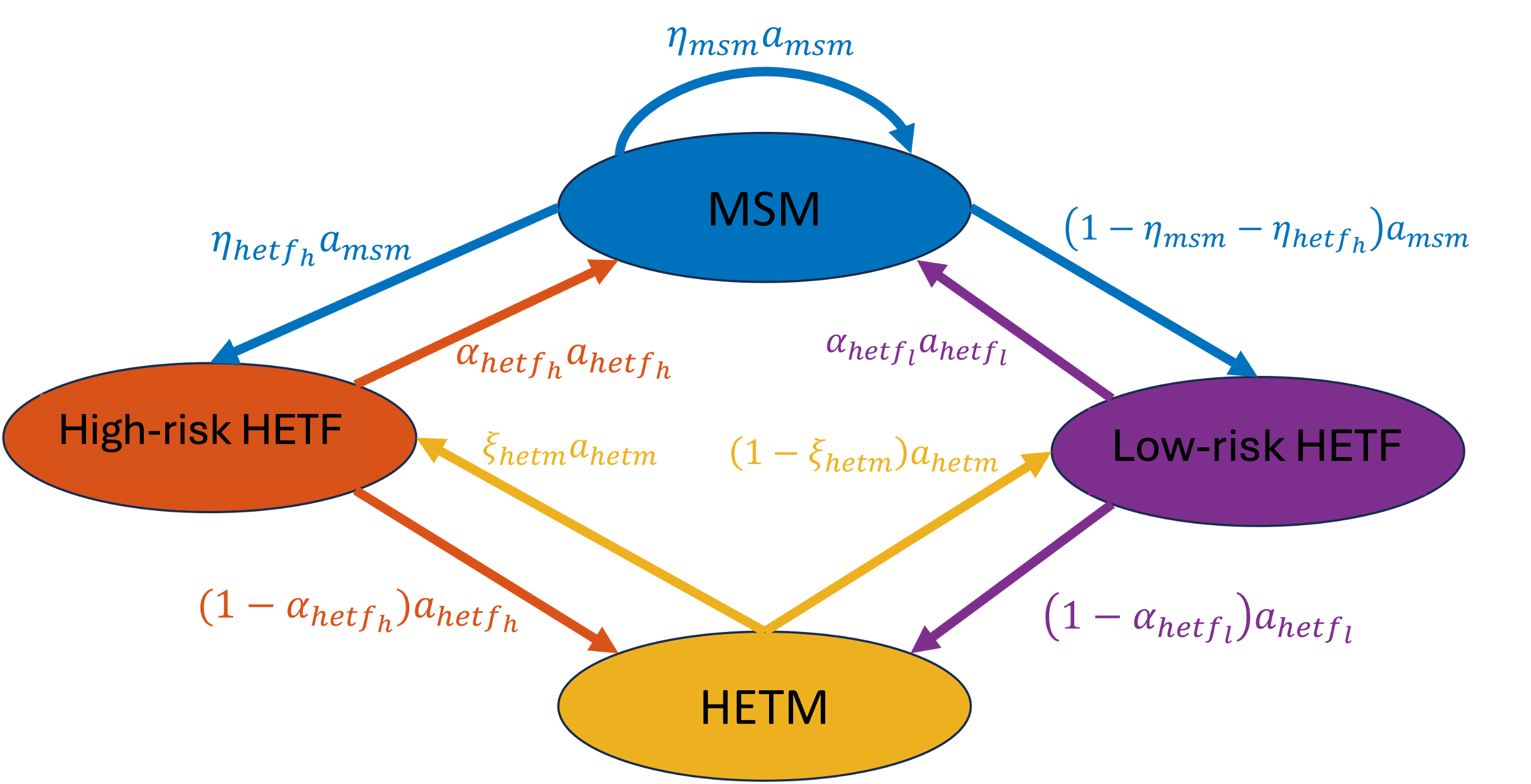}
    \caption{Contact structure of the model \eqref{basicmodel}.}
    \label{fig:model1}
\end{figure}
The groups interact following the contact structure in Fig. \ref{fig:model1}. Briefly, HETM interact only with HETF, HETF interact with both MSM and HETM, and MSM interact among themselves and with HETF. Hence, the basic model for sexual transmission of HIV is given by the following system of nonlinear differential equations: 
{\scriptsize
\begin{equation}\label{basicmodel}
\left\{
\begin{array}{@{}l@{\;}c@{\;}l@{}}

\dss\dfrac{dS_{msm}}{dt} &=& 
\Pi_{msm} - a_{msm}(1-\varepsilon_{msm})\Big[
\eta_{msm}\beta_M^{M}\dfrac{I_{msm}}{N_{msm}}
+ \eta_{hetf_h}\beta_F^{M}\dfrac{I_{hetf_h}}{N_{hetf_h}} \\ \vspace{1mm}
&& 
+ (1-\eta_{msm}-\eta_{hetf_h})\beta_F^{M}\dfrac{I_{hetf_l}}{N_{hetf_l}}
\Big]S_{msm}-\mu S_{msm}, \\ \vspace{1mm}

\dss\dfrac{dI_{msm}}{dt} &=& 
a_{msm}(1-\varepsilon_{msm})\Big[
\eta_{msm}\beta_M^{M}\dfrac{I_{msm}}{N_{msm}}
+ \eta_{hetf_h}\beta_F^{M}\dfrac{I_{hetf_h}}{N_{hetf_h}} \\ \vspace{1mm}
&& 
+ (1-\eta_{msm}-\eta_{hetf_h})\beta_F^{M}\dfrac{I_{hetf_l}}{N_{hetf_l}}
\Big]S_{msm} - (\mu+\delta_{msm})I_{msm}, \\ \vspace{1mm}

\dss\dfrac{dS_{hetf_h}}{dt} &=& 
\Pi_{hetf_h} - a_{hetf_h}(1-\varepsilon_{hetf_h})\beta_M^{F}
\Big[\alpha_{hetf_h}\dfrac{I_{msm}}{N_{msm}}
+ (1-\alpha_{hetf_h})\dfrac{I_{hetm}}{N_{hetm}}\Big]S_{hetf_h} - \mu S_{hetf_h}, \\ \vspace{1mm}

\dss\dfrac{dI_{hetf_h}}{dt} &=& 
a_{hetf_h}(1-\varepsilon_{hetf_h})\beta_M^{F}
\Big[\alpha_{hetf_h}\dfrac{I_{msm}}{N_{msm}}
+ (1-\alpha_{hetf_h})\dfrac{I_{hetm}}{N_{hetm}}\Big]S_{hetf_h}
- (\mu+\delta_{hetf_h})I_{hetf_h}, \\ \vspace{1mm}

\dss\dfrac{dS_{hetf_l}}{dt} &=& 
\Pi_{hetf_l} - a_{hetf_l}(1-\varepsilon_{hetf_l})\beta_M^{F}
\Big[\alpha_{hetf_l}\dfrac{I_{msm}}{N_{msm}}
+ (1-\alpha_{hetf_l})\dfrac{I_{hetm}}{N_{hetm}}\Big]S_{hetf_l}
- \mu S_{hetf_l}, \\ \vspace{1mm}

\dss\dfrac{dI_{hetf_l}}{dt} &=&
a_{hetf_l}(1-\varepsilon_{hetf_l})\beta_M^{F}
\Big[\alpha_{hetf_l}\dfrac{I_{msm}}{N_{msm}}
+ (1-\alpha_{hetf_l})\dfrac{I_{hetm}}{N_{hetm}}\Big]S_{hetf_l}
- (\mu+\delta_{hetf_l})I_{hetf_l}, \\ \vspace{1mm}

\dss\dfrac{dS_{hetm}}{dt} &=& 
\Pi_{hetm} - a_{hetm}(1-\varepsilon_{hetm})\beta_F^{M}
\Big[\xi_{hetm}\dfrac{I_{hetf_h}}{N_{hetf_h}}
+ (1-\xi_{hetm})\dfrac{I_{hetf_l}}{N_{hetf_l}}\Big]S_{hetm}
- \mu S_{hetm}, \\ \vspace{1mm}

\dss\dfrac{dI_{hetm}}{dt} &=& 
a_{hetm}(1-\varepsilon_{hetm})\beta_F^{M}
\Big[\xi_{hetm}\dfrac{I_{hetf_h}}{N_{hetf_h}}
+ (1-\xi_{hetm})\dfrac{I_{hetf_l}}{N_{hetf_l}}\Big]S_{hetm}
- (\mu+\delta_{hetm})I_{hetm}.
\end{array}
\right.
\end{equation}
}

\noindent together with appropriate initial conditions. The parameters of the model are described in Table \ref{tab:Model1Parameters}. Closure conditions must be satisfied at each time $t$ to ensure that intra-group sexual contacts are symmetric, specifically:

{\small \begin{equation}\begin{split}\label{closureConditions}
\eta_{hetf_h} N_{msm} a_{msm} & = \alpha_{hetf_h} N_{hetf_h} a_{hetf_h}, \\
(1-\eta_{msm}-\eta_{hetf_h}) N_{msm} a_{msm} & = \alpha_{hetf_l} N_{hetf_l} a_{hetf_l},  \\ (1-\alpha_{hetf_h})N_{hetf_h} a_{hetf_h} & = \xi_{hetm} N_{hetm} a_{hetm}, \\
(1-\alpha_{hetf_l})N_{hetf_l} a_{hetf_l} & = (1-\xi_{hetm})N_{hetm} a_{hetm}.
\end{split}\end{equation}}

% \begin{table}
%     \begin{center}
%     \begin{tabular}{ |c|c|c| } 
%     \hline
%     Parameter &  Description  &  Unit  \\
%     \hline\hline
%     $\Pi_j$ & Recruitment rate into the sexually-active population & Persons$\cdot$Years$^{-1}$ \\ \hline
%     $a_j$ & Average number of contacts  per year for group $j$ & Years$^{-1}$ \\ \hline
%     $\mu$ & Natural death (or removal) rate in the community & Years$^{-1}$  \\ \hline
%     $\delta_j$ & Disease-induced mortality rate for HIV-infected individuals in group $j$ & Years$^{-1}$  \\ \hline
%     $\varepsilon_j$ & Fraction of  individuals in group $j$ who are on PrEP & Dimensionless \\ \hline
%     $\beta_{M}^{M}$ & Probability of HIV transmission per male-to-male contact & Dimensionless  \\ \hline
%     $\beta_{F}^{M}$ & Probability of HIV transmission per female-to-male contact & Dimensionless  \\ \hline
%     $\beta_{M}^{F}$ & Probability of HIV transmission per male-to-female contact & Dimensionless  \\ \hline
%     $\eta_{msm}$ & Proportion of MSM contacts that are with MSM & Dimensionless  \\ \hline
%     $\alpha_{hetf}$ & Proportion of HETF contacts that are with MSM & Dimensionless  \\ \hline
%     \end{tabular}
%     \caption{Description of parameters of the basic model \eqref{basicmodel}.}
%     \label{tab:Model1Parameters} 
%     \end{center}
% \end{table}

\begin{table}\scriptsize
    \centering
    \begin{tabular}{ |p{0.83in}|p{3.9in}|} 
    \hline
 Parameter  &  Description \\
\hline\hline
$\Pi_j\,$ & Recruitment rate of new sexually-active individuals (persons {\it per} year) \\ \hline
$a_j$ & Number of sexual contacts {\it per} year for individuals in group $j$ \\ \hline
$\delta_j\,$ & Disease-induced mortality rate for individuals in group $j$ (year$^{-1}$) \\ \hline
$\varepsilon_j\,$ & Fraction of individuals in group $j$ on PrEP (dim) \\ \hline
$\mu$ & Natural death rate \\ \hline
$\beta_{M}^{M}$ & Probability of HIV transmission {\it per} male-to-male contact (dim) \\ \hline
$\beta_{F}^{M}$ & Probability of HIV transmission {\it per} female-to-male contact (dim)\\ \hline
$\beta_{M}^{F}$ & Probability of HIV transmission {\it per} male-to-female contact (dim)\\ \hline
$\eta_{msm}$ & Proportion of MSM contacts that are with MSM (dim)\\ \hline
$\eta_{hetf_h}$ & Proportion of MSM contacts that are with high-risk heterosexual females (dim)\\ \hline
$1-\eta_{msm}-\eta_{hetf_h}$ & Proportion of MSM contacts that are with low-risk heterosexual females (dim)\\ \hline
$\alpha_{hetf_h}$ & Proportion of sexual contacts of high-risk HETF that are with MSM (dim)\\ \hline
$1-\alpha_{hetf_h}$ & Proportion of sexual contacts of high-risk HETF that are with HETM (dim)\\ \hline
$\alpha_{hetf_l}$ & Proportion of sexual contacts of low-risk HETF that are with MSM (dim)\\ \hline
$1-\alpha_{hetf_l}$ & Proportion of sexual contacts of low-risk HETF that are with HETM (dim)\\ \hline
$\xi_{hetm}$ & Proportion of HETM sexual contacts that are with high-risk HETF (dim)\\ \hline
$1-\xi_{hetm}$ & Proportion of HETM sexual contacts that are with low-risk HETF (dim) \\ \hline \hline
\end{tabular}
\caption{Description of the parameters of the model \eqref{basicmodel}. Notations: $j = \{msm, hetf_{h}, hetf_{l}, hetm\}$ and dim represents ``dimensionless" parameters.}
    \label{tab:Model1Parameters} 
\end{table}

\subsection{Mathematical analysis}
% \subsection{Basic Qualitative Properties}
\label{positive_invariance_basicmodel}
\noindent Since the model \eqref{basicmodel} monitors the dynamics of human populations, all its state variables and parameters are assumed to be non-negative. In this section, the basic qualitative features of the model \eqref{basicmodel} will be explored, specifically the invariance and boundedness of its solutions. The local asymptotic stability of the disease-free equilibrium of the model \eqref{basicmodel} will also be assessed. 
\par We define the following biologically-feasible region for the model \eqref{basicmodel}:
% The local asymptotic stability of the disease-free equilibrium of \eqref{basicmodel} will also be assessed.  We define the following biologically-feasible region for the model \eqref{basicmodel}:
\begin{eqnarray*}
\Omega =\Big\{\left(S_{msm}, I_{msm}, S_{hetf_h}, I_{hetf_h}, S_{hetf_l}, I_{hetf_l}, S_{hetm} , I_{hetm} \right) \in \mathbb{R}^8_+ :
 0 \le N(t) \le \dfrac{\Pi}{\mu}\Big\}, \,\,
\end{eqnarray*}
where $\Pi = \Pi_{msm} + \Pi_{hetf_h} + \Pi_{hetf_l} +\Pi_{hetm}.$
We claim the following result.
% We claim the following result:
\begin{thm}
\label{pos_inv_theorem_basicmodel}
The region $\Omega$ is positively-invariant and bounded with respect to the flow generated by the model, and attracts all solutions of the model \eqref{basicmodel} with non-negative initial conditions.  
\end{thm}
\begin{proof}
Adding all the equations of the model (\ref{basicmodel}) gives:
\begin{equation*}\label{Nsum}
\frac{dN}{dt}=\Pi - \mu N -\delta_{msm} I_{msm} - \delta_{hetf_h} I_{hetf_h} - \delta_{hetf_l} I_{hetf_l} - \delta_{hetm} I_{hetm}.
\end{equation*}
\noindent
Since all state variables and parameters of the model are non-negative, it follows 
that:
\begin{equation}\label{posInv}
\frac{dN}{dt} \le \Pi - \mu N.
\end{equation}
Hence, if $N(t) >
\frac{\Pi}{\mu}$
, then $\dfrac{dN}{dt} < 0$. Thus, by applying a standard comparison theorem \cite{lakshmikantham1969differential} on \eqref{posInv}, we obtain:
\begin{equation*}
N(t) \le \dfrac{\Pi}{\mu} + \left[N(0) - \dfrac{\Pi}{\mu}\right]e^{-\mu t}.
\end{equation*}

In particular, $N(t) \leq
\frac{\Pi}{\mu}$ if $N(0) \leq
\frac{\Pi}{\mu}$. Furthermore, if $N(0) >
\frac{\Pi}{\mu}$, then $\frac{N(t)}{dt} < 0$. Hence, every solution of the
model  with initial conditions in $\Omega$ remains in $\Omega$ for all time $t \geq 0$. Moreover, every solution with
non-negative initial conditions will eventually enter $\Omega$ in finite time and remain there thereafter.
Thus, the region $\Omega$ is positively-invariant, bounded, and attracts all solutions of the model in $\mathbb{R}^8_+$
\cite{safdar2023mathematical,safdar2022mathematical,tollett2024dynamics}.
\end{proof} 
The consequence of Theorem \ref{positive_invariance_basicmodel} is that it is sufficient to consider the dynamics of the flow
generated by \eqref{basicmodel} in $\Omega$, where the model is well-posed mathematically and
epidemiologically \cite{hethcote2000mathematics}.

\subsubsection{Asymptotic stability of the disease-free equilibrium}\label{reproduction_number_basicmodel}
The disease-free equilibrium (DFE) of the model \eqref{basicmodel}, denoted by ${\mathcal E}_0$, is given by:
\begin{eqnarray}\begin{split}
        \label{Eqbm_point_1}
{\mathcal E}_0 =  \left(S_{msm}^{*}, I_{msm}^{*}, S_{hetf_h}^{*}, I_{hetf_h}^{*}, S_{hetf_l}^{*}, I_{hetf_l}^{*}, S_{hetm}^{*}, I_{hetm}^{*}\right) \\ = \left(\dfrac{\Pi_{msm}}{\mu},0,\dfrac{\Pi_{hetf_h}}{\mu},0,\dfrac{\Pi_{hetf_l}}{\mu},0,\dfrac{\Pi_{hetm}}{\mu},0 \right).
\end{split}\end{eqnarray}
Its local asymptotic stability is explored using the {\it next generation operator} method \cite{diekmann1990definition,van2002reproduction}.  Specifically, using the notation in \cite{van2002reproduction}, we denote the associated non-negative matrix of new infection terms near or at the DFE as $F$ and that for the linear transition terms in the infected compartments of the model as $V$ (these matrices are given in the Supplementary Material \eqref{ngm_F_riskmodelMainText}). It follows then that the {\it control reproduction number} of the model, denoted as ${\mathbb R}_{c}$ and defined as ${\mathbb R}_{c} := \rho(F V^{-1})$ (where $\rho $ is the spectral radius), 
is given by:
\begin{equation}\label{eq:R0_riskmodelMainText}
{\mathbb R}_{c} % = \rho(F V^{-1})
= \max\{
{\mathbb R}_{c_1},
{\mathbb R}_{c_2},
{\mathbb R}_{c_3},
{\mathbb R}_{c_4}\},
\end{equation}
where % $\rho$ is the spectral radius and 
$ {\mathbb R}_{c_1} = \dfrac{f_{11}}{4\,K_{1}} - H_1 - H_3, \, {\mathbb R}_{c_2} = \dfrac{f_{11}}{4\,K_{1}} + H_1 - H_3, \, {\mathbb R}_{c_3} = \dfrac{f_{11}}{4\,K_{1}} + H_3 - H_2, \text{ and } \,{\mathbb R}_{c_4} = \dfrac{f_{11}}{4\,K_{1}} + H_2 + H_3.$ % The quantity ${\mathbb R}_{c}$ measures the average number of new HIV cases generated by a typical HIV-infected individual if introduced in a population where a certain proportion of individuals are on PrEP \cite{hethcote2000mathematics,anderson1979population}. 
The terms $H_1,\,H_2,$ and $H_3$, explicitly defined in  the Supplementary Material \eqref{hdefns}, are associated with the within and between group transmission of HIV. Furthermore, 
\begin{equation*}\label{f11andKdef}
    f_{11} = \dfrac{\left[a_{msm}\,(1-\varepsilon_{msm}) \eta_{msm}\,\beta_{M}^{M}\right] S_{msm}^{*}}{N_{msm}^{*}}, \,\,{\rm and}\,\, K_1 = \mu + \delta_{msm}.
\end{equation*}
Observe from \eqref{eq:R0_riskmodelMainText} that the term $f_{11}/4K_{1}$, which depends only on MSM-related transmission and demographic parameters, appears in each of the four terms of ${\mathbb R}_c$. This shows that HIV transmission in the overall population depends significantly on MSM-dynamics independently; if $f_{11}/4 K_{1}$ is large, then it is possible for $\mathbb{R}_{c} > 1$, regardless of HIV transmission in other groups. This shows a strong influence of HIV transmission in the MSM population on the overall transmission, motivating further study of PrEP spillover effects. The result below follows Theorem 2 of \cite{van2002reproduction}.
\begin{thm}
\label{las_basicmodel} The disease-free equilibrium ${\mathcal E}_0$ of the model \eqref{basicmodel} is locally asymptotically stable if ${\mathbb R}_{c} < 1$, and unstable if ${\mathbb R}_{c} > 1$. 
\end{thm}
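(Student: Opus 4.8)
The plan is to recognize this statement as a direct application of the next-generation operator framework of van den Driessche and Watmough \cite{van2002reproduction}, whose Theorem 2 guarantees precisely that the disease-free equilibrium is locally asymptotically stable when $\rho(FV^{-1}) < 1$ and unstable when $\rho(FV^{-1}) > 1$, provided the model satisfies their structural hypotheses (A1)--(A5). Since $\mathbb{R}_{c} = \rho(FV^{-1})$ by \eqref{eq:R0_basicmodel}, the entire task reduces to verifying that the basic model \eqref{basicmodel} fits their framework and that the matrices in \eqref{ngm_F} are indeed the associated new-infection and transition matrices.

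First I would reorder the state variables so that the three infected compartments $(I_{msm}, I_{hetf}, I_{hetm})$ come first, and write the infected subsystem in the form $\dot{x} = \mathcal{F}(x) - \mathcal{V}(x)$, assigning to $\mathcal{F}$ the three transmission (incidence) terms and to $\mathcal{V}$ the removal terms $(\mu + \delta_j) I_j$. I would then check the assumptions: (A1) nonnegativity of $\mathcal{F}$, $\mathcal{V}^+$, $\mathcal{V}^-$ on $\mathbb{R}^6_+$, which holds because all parameters and state variables are nonnegative in $\Omega$; (A2)--(A4) the vanishing of new infections and of the relevant transfer terms on the disease-free subspace, which is immediate since every incidence term carries a factor $I_j$; and (A5), which requires all eigenvalues of $V$ to have positive real part. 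This last condition is trivial here, because $V = \mathrm{diag}(\mu+\delta_{msm}, \mu+\delta_{hetf}, \mu+\delta_{hetm})$ is diagonal with strictly positive entries.

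Next I would linearize the full six-dimensional system about $\mathcal{E}_0$ in \eqref{Eqbm_point_1} and observe that the Jacobian is block triangular: the susceptible block contributes only the eigenvalues $-\mu$ (from the $-\mu S_j$ terms, since every infection term vanishes at the disease-free equilibrium), all with negative real part, while the stability of the remaining directions is governed entirely by the infected block $F - V$. Computing $F = D\mathcal{F}(\mathcal{E}_0)$ and $V = D\mathcal{V}(\mathcal{E}_0)$ restricted to the infected compartments and using $N_j^* = S_j^*$ reproduces exactly the entries $f_{11}, f_{12}, f_{21}, f_{23}, f_{32}$ recorded after \eqref{ngm_F}. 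With the hypotheses verified, Theorem 2 of \cite{van2002reproduction} applies directly and yields the claimed threshold dichotomy.

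The step requiring the most care is not any single calculation but the correct partition of each infected equation into a genuine new-infection part $\mathcal{F}$ and a transfer part $\mathcal{V}$: an incorrect split changes the threshold value of $\mathbb{R}_{c}$ even though the qualitative stability conclusion is unaffected. The only potentially nontrivial structural point, (A5), collapses immediately because $V$ is diagonal, so I do not anticipate a genuine obstacle; in particular the explicit closed form of $\rho(FV^{-1})$ recorded in \eqref{eq:R0_basicmodel} (obtained via Cardano's formula for the characteristic cubic of $FV^{-1}$) is not needed for the stability argument itself, only the fact that $\mathbb{R}_{c}$ crosses unity.
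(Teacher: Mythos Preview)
Your proposal is correct and matches the paper's approach exactly: the paper simply states that the result follows from Theorem~2 of \cite{van2002reproduction}, without spelling out the verification of (A1)--(A5). Your write-up is in fact more detailed than what appears in the paper, since you explicitly check the structural hypotheses and note that the diagonal form of $V$ makes (A5) immediate.
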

\noindent
%Further details are provided \ref{localStabComplete}. 
The epidemiological implication of Theorem \ref{las_basicmodel} is that a small influx of PWH will not generate a large outbreak if
${\mathbb R}_{c}$ can be brought to (and maintained at) a value less than one. In such a case, the disease can be effectively controlled if the initial sizes of the subpopulations of the model lie within the basin of attraction of the DFE.
% transmission can be
% controlled if the initial conditions are in
% the region $\Omega$ defined in Theorem \ref{pos_inv_theorem_basicmodel}.  
This result can be extended to prove the global asymptotic stability of the DFE for the special case of the model in the absence of disease-induced mortality, as shown in the Supplementary Material \ref{GAS_DFE_Supp_Mat}.

\subsection{ Qualitative analysis and assessment of PrEP spillover effects} \label{sect:spillover_basic}
We now analyze \eqref{basicmodel} in terms of its spillover effects; that is, how PrEP uptake in one subgroup alters the incidence of the disease in the other subgroups. We consider the HIV \textit{incidence} in group $j$ at a time $t$, given by (assuming $\delta_j=0$ for simplicity):
\begin{equation}\label{hivIncidenceEq}
   \lambda_j(t, \boldsymbol{\theta}) = \dfrac{d I_j}{dt} + \mu I_j,
\end{equation}
where $\boldsymbol{\theta}$ denote the parameters of the underlying dynamical system \eqref{basicmodel}. We aim to quantify how \eqref{hivIncidenceEq} changes in response to variations in PrEP allocation. We also define a related quantity, the \textit{cumulative annual HIV incidence} in a given year, as
\begin{equation}\label{annualIncidenceEq}
    \lambda_j^{year}(\boldsymbol{\theta}) = \int_{t\in year} \lambda_j(t,\boldsymbol{\theta}) \, dt.
\end{equation}

\par Note that linearization analysis, such as performed in the previous subsection, cannot be applied to study spillover effects in a straightforward way. To understand why, consider the system \eqref{basicmodel}, and assume we wish to study how $\lambda_{hetf_h}$ changes in response to increases in PrEP among MSM. We get: % . Incidence for high-risk HETF $\lambda_{hetf_h}$ is given by the equation:
\begin{equation*}
\lambda_{hetf_h} = a_{hetf_h}\,(1-\varepsilon_{hetf_h})\beta_{M}^{F}\left[\alpha_{hetf_h} \dfrac{I_{msm}}{N_{msm}} + (1-\alpha_{hetf_h})\dfrac{I_{hetm}}{N_{hetm}}\right]S_{hetf_h}.
\end{equation*}
Changing PrEP among MSM affects the term $I_{msm}/N_{msm}$ over time; however, in a linearized analysis, this term must be \textit{fixed} at a given point. Therefore, accounting for the influence of PrEP uptake over longer time horizons requires re-linearization.

We thus pursue a different approach.  We define the following quantities:
\begin{equation}\label{sensitivitiesIntro}
    \sigma_j^k(t) := \dfrac{\partial S_j}{\partial \varepsilon_k},\,\,\, \gamma_j^k(t) := \dfrac{\partial I_j}{\partial \varepsilon_k},
\end{equation}
which describe the response in a group $j$ to a change in the \textit{fraction} of individuals on PrEP $\varepsilon_k$ in group $k$. For example, we can view $\gamma_{msm}^{msm}$ as defining the \textit{direct effect} of PrEP use among MSM on the MSM infected population, while $\gamma_{hetf_l}^{msm}$ % and $\gamma_{hetm}^{msm}$ 
give the \textit{spillover effect} of PrEP use among MSM on HETF-lo infected population.% and HETM incidence, respectively.

\par 
For most intervention applications, including those studied herein, the \textit{number of individuals on PrEP} in a population $E_k$, not the fraction, i.e.,~$E_k = \varepsilon_k S_k$, is more relevant, as it is more easily measured.
% \textcolor{blue}{For most intervention applications, including those studied herein, the \textit{number of individuals on PrEP} in a population $E_k$, not the fraction, is more relevant. One can modify \eqref{sensitivitiesIntro} by observing:
%\begin{equation}\label{prepFrac}
%    \varepsilon_k = \frac{E_k}{S_k}.
%\end{equation}}
Applying the chain rule gives:
\begin{equation}\label{populationNumberSens}
\frac{\partial S_j}{\partial E_k} = \frac{\partial S_j}{\partial \varepsilon_k} \times \frac{d \varepsilon_k }{d E_k} = \frac{\sigma_j^k}{S_k}, \qquad \frac{\partial I_j}{\partial E_k} = \frac{\partial I_j}{\partial \varepsilon_k} \times \frac{d \varepsilon_k }{d E_k} = \frac{\gamma_j^k}{S_k},
\end{equation}
which describe the response of $S_j,\,I_j$ to the \textit{number} of PrEP users in group $k$. In the sections that follow, we will be primarily concerned with these quantities. 

%\par The HIV \textit{incidence} (i.e., new infections) is given by:
%\begin{equation}\label{hivIncidenceEq}
%   \lambda_j(t) = \dfrac{d I_j}{dt} + \mu I_j.
%\end{equation}
The sensitivity of \eqref{hivIncidenceEq} on PrEP uptake is obtained as: 
\begin{equation}\label{incidenceSensitivity}
    \dfrac{\partial}{\partial E_k } \left[\lambda_j(t,\boldtheta{})\right] =     \dfrac{\partial}{\partial E_k }\left[\dfrac{d I_j}{dt}\right] + \mu     \dfrac{\partial}{\partial E_k } \left[ I_j \right] = \dfrac{d}{dt}\left[  \dfrac{\partial I_j}{\partial E_k } \right] + \mu \dfrac{\gamma_j^k}{S_k} = \dfrac{d}{dt}\left[ \dfrac{\gamma_j^k}{S_k}\right] + \mu \dfrac{\gamma_j^k}{S_k} .
\end{equation}
%\begin{align}\label{incidenceSensitivity}
%\frac{d \widehat{\gamma}_j^k}{dt} &= \frac{d\gamma_j^k}{dt} + \mu \gamma_j^k.
%\end{align}
 Note that ${\gamma}_j^k/S_k$  has units (New infections) / (Additional Person on PrEP) and $S_k/{\gamma}_j^k$ (Additional Person on PrEP) / (New Infections). This motivates the following proposition, relating \eqref{populationNumberSens} to the \textit{Number Needed to Treat} (NNT), the primary quantity used to measure PrEP effectiveness in population studies \cite{buchbinder2014should, elion2019estimated, kourtis2025estimating,okwundu2012antiretroviral}:
\begin{equation}\label{NNT}
    \text{NNT} = \dfrac{ \text{Number of additional person-years on PrEP} }{\text{Number of HIV infections prevented}}.
\end{equation} 
It gives the number of additional person-years on PrEP needed to prevent a single HIV infection. On person on PrEP for one year amounts to one person-year, similarly, that same person on PrEP for five years would account for five person-years. Note the person-years in the numerator introduce time-dependence. 
 \begin{proposition}
 The number needed to treat (NNT) over a time period $[0,T]$, can be approximated as:
\begin{equation}\label{NNTequiv}
    \text{NNT}(T) \approx -T \dfrac{S_k(T)}{\gamma_j^k(T)}.
\end{equation}
  \end{proposition}
\begin{proof}
    Assume initially that there are $E^0_k$ total PrEP users in group $k$, and let $\Delta E_k$ denote an increment of additional PrEP users in group $k$. Denote as $\lambda_j(t,\,E_k^0)$ the original incidence in group $j$, and $\lambda_j(t,\,E^0_k + \Delta E_k)$ the incidence in group $j$ at time $t$ after increasing the number of PrEP users in group $k$ by $\Delta E_k$. 
From \eqref{NNT}:
\begin{equation}\label{nntManip}
        \text{NNT} = %\dfrac{ \text{Number of additional person-years on PrEP} }{\text{Number of HIV infections prevented}} =
        \dfrac{T \Delta E_k }{\int_0^T [\lambda_j(t,E^0_k)-\lambda_j(t,E^0_k+\Delta E_k)] dt}.
\end{equation}
Because the ODE system defining $\lambda_j$ is smooth in both state and parameter, we may expand $\lambda_j$ in a Taylor series about $E^0_k$ uniformly for all $t$, giving:
\begin{equation}\label{lambdaExpansion}
    \lambda_j(t,E^0_k + \Delta E_k) = \lambda_j(t,E^0_k) + \dfrac{\partial }{\partial E_k }\lbrack\lambda_j(t,E^0_k)\rbrack \Delta E_k + \mathcal{O} (\Delta E_k^2)
\end{equation}
Neglecting higher-order terms, substituting \eqref{lambdaExpansion} into \eqref{nntManip} gives:

\begin{equation}\label{NNTderiv}\small
        \text{NNT} \approx \dfrac{T \Delta E_k}{\displaystyle \int_0^T \left(\lambda_j(t,E^0_k) - (\lambda_j(t,E^0_k) +  \dfrac{\partial }{\partial E_k }\lbrack\lambda_j(t,E^0_k)\rbrack \Delta E_k )\right)dt }  = \dfrac{-T}{\displaystyle\int_0^T \dfrac{\partial}{\partial E_k }\lbrack\lambda_j(t,E^0_k)\rbrack dt}.
\end{equation}
 Recalling \eqref{incidenceSensitivity}, we may write \eqref{NNTderiv} as:
\begin{equation*}\label{nntDeriv2}
\text{NNT} \approx %\dfrac{T}{\int_0^T \dfrac{\partial}{\partial E_k }\lbrack\lambda_j(\widehat{E}_0)\rbrack dt} = 
\dfrac{-T}{\displaystyle \int_0^T \left( \dfrac{d}{dt}\left\lbrack \dfrac{\gamma_j^k}{S_k}\right\rbrack + \mu \dfrac{\gamma_j^k}{S_k} \right) dt  } = \dfrac{-T}{\displaystyle \int_0^T e^{-\mu t} \dfrac{d}{dt}\left\lbrack \dfrac{\gamma_j^k}{S_k} e^{\mu t} \right\rbrack dt }.
\end{equation*} 
Evaluating the integral in the denominator yields:
\begin{equation}\label{NNTderivIntegral}
    \int_0^T e^{-\mu t} \dfrac{d}{dt}\left\lbrack \dfrac{\gamma_j^k}{S_k} e^{\mu t} \right\rbrack dt = \left\lbrack e^{-\mu t} e^{\mu t} \dfrac{\gamma_j^k(t)}{S_k(t)} \right\rbrack_0^T + \mu \int_0^T \dfrac{\gamma_j^k}{S_k} dt. 
\end{equation}
Noting that $\gamma_j^k(0)=0$, we obtain:
\begin{equation}\label{NNTderiv4}
    \text{NNT} \approx \dfrac{-T}{\dfrac{\gamma_j^k(T)}{S_k(T)} + \mu \displaystyle \int_0^T \frac{\gamma_j^k}{S_k}dt}.
\end{equation}
In general, the mortality rate $\mu$ corresponds to much longer time-scales than the simulation period $[0,\,T]$ (in this work we have $\mu=1/50$ years compared to a 10-year simulation period, see Section \ref{sect:simulation_basic_model}). Hence, over shorter time scales, we may neglect the integral term in the denominator, yielding the desired result. \end{proof}
\subsubsection{ Derivation of spillover equation}\label{sect:spillover_derivation_basic}
The goal of this section is to derive differential equations for \eqref{sensitivitiesIntro}. We consider the system \eqref{basicmodel} with the simplifying assumption $\delta_j=0$ for each group and introduce the following notation:
{\footnotesize
\begin{equation}\label{vectorLambdas}
\begin{split}
%vector of state
\boldsymbol{X} & = \left( S_{msm}, \, I_{msm} , \,  S_{hetf_h} , \,  I_{hetf_h} , \,  S_{hetf_l} , \,  I_{hetf_l}, \,  S_{hetm} , \,  I_{hetm} \right)^{\textrm{T}}, \\
%msm incidence vector term
\blambda{msm} & = a_{msm}\left( 0 , \, \eta_{msm} \frac{\beta_{M}^{M}}{N_{msm}} , \, 0 , \,  \eta_{hetf_h} \frac{\beta_{F}^{M}}{N_{hetf_h}} , \, 0  , \,  (1-\eta_{msm}-\eta_{hetf_h}) \frac{\beta_{F}^{M}}{N_{hetf_l}} , \, 0 , \, 0 \right)^{\textrm{T}}, \\
%hetf incidence vector term
\blambda{hetf_h} & = a_{hetf_h} \left( 0 , \, \alpha_{hetf_h} \frac{\beta_{M}^{F}}{N_{msm}} , \, 0 , \, 0 , \, 0, \,  0 , \, 0 , \,(1-\alpha_{hetf_h}) \frac{\beta_{F}^{M}}{N_{hetm}} \right)^{\textrm{T}}, \\
%hetf incidence vector term
\blambda{hetf_l} & = a_{hetf_l}\left( 0 , \,  \alpha_{hetf_l} \frac{\beta_{M}^{F}}{N_{msm}} , \, 0 , \, 0 , \, 0 , \, 0 , \, 0 , \, (1-\alpha_{hetf_l}) \frac{\beta_{F}^{M}}{N_{hetm}} \right)^{\textrm{T}}, \\
%hetm incidence vector term
 \blambda{hetm} & = a_{hetm}\left( 0 , \, 0 , \, 0 , \, \xi_{hetm} \frac{\beta_{F}^{M}}{N_{hetf_h}} , \, 0 , \, (1-\xi_{hetm}) \frac{\beta_{F}^{M}}{N_{hetf_l}} , \, 0 , \, 0 \right)^{\textrm{T}}.
\end{split}
\end{equation}
}
% Remembering that we assume $\delta_j=0$ for each $j$ for the moment,  
allowing us to write \eqref{basicmodel} more compactly as:
\begin{equation}\label{basicmodelcompact}
\left\{
\begin{array}{lcl}
%\label{susMSM}
\dss\frac{dS_{msm}}{dt} &=&\Pi_{msm} - (1-\varepsilon_{msm})\left(\blambda{msm}^T \boldsymbol{X}\right)S_{msm}  -\mu S_{msm}, \vspace*{2mm}\\
%\label{infMSM}
\dss\frac{dI_{msm}}{dt} &=& (1-\varepsilon_{msm})\left(\blambda{msm}^T \boldsymbol{X}\right)S_{msm} - \mu  I_{msm}, \vspace*{2mm} \\
%\label{susHETF} 
\dss\frac{dS_{hetf_h}}{dt} &=& \Pi_{hetf_h} - (1-\varepsilon_{hetf_h})\left(\blambda{hetf_h}^T \boldsymbol{X}\right)S_{hetf_h} - \mu S_{hetf_h}, \vspace*{2mm} \\
%\label{infHETF}
\dss\frac{dI_{hetf_h}}{dt} &=& (1-\varepsilon_{hetf_h})\left(\blambda{hetf_h}^T \boldsymbol{X}\right)S_{hetf_h}  - \mu I_{hetf_h}, \vspace*{2mm} \\
%\label{susHETF} 
\dss\frac{dS_{hetf_l}}{dt} &=& \Pi_{hetf_l} - (1-\varepsilon_{hetf_l})\left(\blambda{hetf_l}^T \boldsymbol{X}\right)S_{hetf_l} - \mu S_{hetf_l}, \vspace*{2mm} \\
%\label{infHETF}
\dss\frac{dI_{hetf_l}}{dt} &=& (1-\varepsilon_{hetf_l})\left(\blambda{hetf_l}^T \boldsymbol{X}\right)S_{hetf_l}  - \mu I_{hetf_l}, \vspace*{2mm} \\
%\label{susHETM}
\dss\frac{dS_{hetm}}{dt} &=& \Pi_{hetm} - (1-\varepsilon_{hetm})\left(\blambda{hetm}^T \boldsymbol{X}\right) S_{hetm} - \mu S_{hetm},\vspace*{2mm}  \\
%\label{infHETM}
\dss\frac{dI_{hetm}}{dt} &=& (1-\varepsilon_{hetm})\left(\blambda{hetm}^T \boldsymbol{X}\right)S_{hetm} -
\mu S_{hetm}.
\end{array}
\right.
\end{equation}
Let us introduce a perturbation $\tildeeps$ to the PrEP fraction among MSM, denoting the resulting perturbed quantities as $\widehat{S}_j,\,\widehat{I}_j,\,\widehat{\boldsymbol{X}}$. The perturbed system reads:
\begin{equation}\label{basicmodelPerturbed}
\left\{
\begin{array}{lcl}
%\label{susMSM}
\dfrac{d\widehat{S_{msm}}}{dt} &=&\Pi_{msm} - (1-(\varepsilon_{msm}+\tildeeps))\left(\blambda{msm}^T \widehat{\boldsymbol{X}}\right)\widehat{S}_{msm}  -\mu \widehat{S}_{msm}, \vspace*{2mm}\\
%\label{infMSM}
\dfrac{d\widehat{I_{msm}}}{dt} &=& (1-(\varepsilon_{msm}+\tildeeps))\left(\blambda{msm}^T \widehat{\boldsymbol{X}}\right)\widehat{S}_{msm} - \mu  \widehat{I}_{msm}, \vspace*{2mm} \\
%\label{susHETF} 
\dfrac{d\widehat{S_{hetf_h}}}{dt} &=& \Pi_{hetf_h} - (1-\varepsilon_{hetf_h})\left(\blambda{hetf_h}^T \widehat{\boldsymbol{X}}\right)\widehat{S}_{hetf_h} - \mu \widehat{S}_{hetf_h}, \vspace*{2mm} \\
%\label{infHETF}
\dfrac{d\widehat{I_{hetf_h}}}{dt} &=& (1-\varepsilon_{hetf_h})\left(\blambda{hetf_h}^T \widehat{\boldsymbol{X}}\right)\widehat{S}_{hetf_h}  - \mu \widehat{I}_{hetf_h}, \vspace*{2mm} \\
%\label{susHETF} 
\dfrac{d\widehat{S_{hetf_l}}}{dt} &=& \Pi_{hetf_l} - (1-\varepsilon_{hetf_l})\left(\blambda{hetf_l}^T \widehat{\boldsymbol{X}}\right)\widehat{S}_{hetf_l} - \mu \widehat{S}_{hetf_l}, \vspace*{2mm} \\
%\label{infHETF}
\dfrac{d\widehat{I_{hetf_l}}}{dt} &=& (1-\varepsilon_{hetf_l})\left(\blambda{hetf_l}^T \widehat{\boldsymbol{X}}\right)\widehat{S}_{hetf_l}  - \mu \widehat{I}_{hetf_l}, \vspace*{2mm} \\
%\label{susHETM}
\dfrac{d\widehat{S_{hetm}}}{dt} &=& \Pi_{hetm} - (1-\varepsilon_{hetm})\left(\blambda{hetm}^T \widehat{\boldsymbol{X}}\right) \widehat{S}_{hetm} - \mu \widehat{S}_{hetm},\vspace*{2mm}  \\
%\label{infHETM}
\dfrac{d\widehat{I_{hetm}}}{dt} &=& (1-\varepsilon_{hetm})\left(\blambda{hetm}^T \widehat{\boldsymbol{X}}\right)\widehat{S}_{hetm} -
\mu \widehat{I}_{hetm}.
\end{array}
\right.
\end{equation}
 Note that $\blambda{j}$ are the same in \eqref{basicmodelcompact} and \eqref{basicmodelPerturbed} for all time $t$. To see this, note from \eqref{vectorLambdas} that the only state-dependent terms in the $\blambda{j}$ are the total population denominators $N_j$. Adding generic $S_j$ and $I_j$ compartments in \eqref{basicmodelcompact} and $\widehat{S}_j,\,\widehat{I}_j$ in \eqref{basicmodelPerturbed}, one obtains the following differential equations for $N_j$ and $\widehat{N}_j$: \begin{equation}\label{totPop1}
    \dfrac{d N_j}{dt} = \Pi_j - \mu N_j, \quad   \dfrac{d \widehat{N}_j}{dt} = \Pi_j - \mu \widehat{N}_j.
\end{equation}
Hence, if $N_j(0)=\widehat{N}_j(0)$, we have that $N_j(t)=\widehat{N}_j(t)$ at all time. Note in the case where $\delta_j \neq 0$, this is no longer true. %$N_j(t)=\widehat{N}_j(t)$ no longer holds. 
In such a case \eqref{totPop1} becomes:
\begin{equation*}\label{totPop1NonzeroDelt}
    \dfrac{d N_j}{dt}  = \Pi_j - \mu N_j - \delta_j I_j,\quad    \dfrac{d \widehat{N}_j}{dt} = \Pi_j - \mu \widehat{N}_j - \delta_j \widehat{I}_j.
\end{equation*}
Hence, even if $N_j(0)=\widehat{N}_j(0)$, $N_j(t) \neq \widehat{N}_j(t)$ when $I_j(t) \neq \widehat{I}_j(t)$, which is the case in general, as PrEP affects HIV incidence. However, in practice, differences in the $N_j$ populations arising from $\delta_j\neq 0$ are insignificant as $S_j >> I_j$, $\Pi_j >> \delta_j I_j$. 

Therefore, even if it is possible to relax the assumption $\delta_j=0$ (though the resulting analysis is more complex), we present only the $\delta_j=0$ case herein, in the interest of clarity and brevity. We introduce the following notation: 
\[
\boldsymbol{\vartheta}^{msm} = \lim_{\tildeeps \to 0} \frac{\widehat{\boldsymbol{X}} - \boldsymbol{X}}{\tildeeps }=  \begin{pmatrix} \sigma_{msm}^{msm}, \, \gamma_{msm}^{msm} , \, \sigma_{hetf_h}^{msm}, \, \gamma_{hetf_h}^{msm}, \, \sigma_{hetf_l}^{msm}, \, \gamma_{hetf_l}^{msm} , \, \sigma_{hetm}^{msm} , \, \gamma_{hetm}^{msm} \end{pmatrix}^{\textrm{T}}.
\] 
%\[
%\boldsymbol{\vartheta}^{msm} = \lim_{\tildeeps \to 0} \frac{\widehat{\boldsymbol{X}} - \boldsymbol{X}}{\tildeeps }=  \begin{pmatrix} \sigma_{msm}^{msm} \\ \gamma_{msm}^{msm} \\ \sigma_{hetf_h}^{msm} \\ \gamma_{hetf_h}^{msm} \\ \sigma_{hetf_l}^{msm} \\ \gamma_{hetf_l}^{msm} \\ \sigma_{hetm}^{msm} \\ \gamma_{hetm}^{msm} \end{pmatrix}.
%\] 
Assuming identical initial conditions in both \eqref{basicmodelcompact}, \eqref{basicmodelPerturbed} we subtract \eqref{basicmodelcompact} from \eqref{basicmodelPerturbed}. Dividing by $\tildeeps$ and taking the limit as $\tildeeps \to 0$, from straightforward arguments we obtain the full system  describing the effect of $\varepsilon_{msm}$ on each compartment:
\begin{equation}\label{spilloverEqnsMSM}
\left\{ \small
\begin{array}{lcl}
%\label{susMSM}
\dss\frac{d\sigma_{msm}^{msm}}{dt} &=& - (1-\varepsilon_{msm})\left\lbrack\left(\blambda{msm}^T \boldsymbol{\vartheta}^{msm}\right)S_{msm} + \left(\blambda{msm}^T \boldsymbol{X}\right)\sigma_{msm}^{msm}  \right\rbrack +\left(\blambda{msm}^T\boldsymbol{X}\right)S_{msm}  -\mu \sigma_{msm}^{msm}, \vspace*{2mm}\\

%\label{infMSM}
\dss\frac{d\gamma_{msm}^{msm}}{dt} &=&  (1-\varepsilon_{msm})\left\lbrack\left(\blambda{msm}^T \boldsymbol{\vartheta}^{msm}\right)S_{msm} + \left(\blambda{msm}^T \boldsymbol{X}\right)\sigma_{msm}^{msm}  \right\rbrack -\left(\blambda{msm}^T\boldsymbol{X}\right)S_{msm}  -\mu \gamma_{msm}^{msm}, \vspace*{2mm}\\

%\label{susHETF} 
\dss\frac{d\sigma_{hetf_h}^{msm}}{dt} &=& - (1-\varepsilon_{hetf_h})\left\lbrack\left(\blambda{hetf_h}^T \boldsymbol{\vartheta}^{msm}\right)S_{hetf_h} + \left(\blambda{hetf_h}^T \boldsymbol{X}\right)\sigma_{hetf_h}^{msm}  \right\rbrack   -\mu \sigma_{hetf_h}^{msm}, \vspace*{2mm}\\

%\label{infHETF}
\dss\frac{d\gamma_{hetf_h}^{msm}}{dt} &=&  (1-\varepsilon_{hetf_h})\left\lbrack\left(\blambda{hetf_h}^T \boldsymbol{\vartheta}^{msm}\right)S_{hetf_h} + \left(\blambda{hetf_h}^T \boldsymbol{X}\right)\sigma_{hetf_h}^{msm}  \right\rbrack   -\mu \gamma_{hetf_h}^{msm}, \vspace*{2mm}\\

%\label{susHETF} 
\dss\frac{d\sigma_{hetf_l}^{msm}}{dt} &=& - (1-\varepsilon_{hetf_l})\left\lbrack\left(\blambda{hetf_l}^T \boldsymbol{\vartheta}^{msm}\right)S_{hetf_l} + \left(\blambda{hetf_l}^T \boldsymbol{X}\right)\sigma_{hetf_l}^{msm}  \right\rbrack   -\mu \sigma_{hetf_l}^{msm}, \vspace*{2mm}\\

%\label{infHETF}
\dss\frac{d\gamma_{hetf_l}^{msm}}{dt} &=&  (1-\varepsilon_{hetf_l})\left\lbrack\left(\blambda{hetf_l}^T \boldsymbol{\vartheta}^{msm}\right)S_{hetf_l} + \left(\blambda{hetf_l}^T \boldsymbol{X}\right)\sigma_{hetf_l}^{msm}  \right\rbrack   -\mu \gamma_{hetf_l}^{msm}, \vspace*{2mm}\\

%\label{susHETM}
\dss\frac{d\sigma_{hetm}^{msm}}{dt} &=& - (1-\varepsilon_{hetm})\left\lbrack\left(\blambda{hetm}^T \boldsymbol{\vartheta}^{msm}\right)S_{hetm} + \left(\blambda{hetm}^T \boldsymbol{X}\right)\sigma_{hetm}^{msm}  \right\rbrack   -\mu \sigma_{hetm}^{msm}, \vspace*{2mm}\\

%\label{infHETM}
\dss\frac{d\gamma_{hetm}^{msm}}{dt} &=&  (1-\varepsilon_{hetm})\left\lbrack\left(\blambda{hetm}^T \boldsymbol{\vartheta}^{msm}\right)S_{hetm} + \left(\blambda{hetm}^T \boldsymbol{X}\right)\sigma_{hetm}^{msm}  \right\rbrack   -\mu \gamma_{hetm}^{msm}. \vspace*{2mm}\\
\end{array}
\right.
\end{equation}
Note that the initial conditions of the system \eqref{spilloverEqnsMSM} are uniformly zero. 
We refer to the system \eqref{spilloverEqnsMSM} together with its immediate analogues, obtained through an identical procedure describing the effects of PrEP use on HETF-hi, HETF-lo, or HETM, as \textit{PrEP spillover equations}.

\begin{rem}\label{remark1}
Note that the system \eqref{spilloverEqnsMSM} is defined for specific values of $\varepsilon_{msm}$. The spillover equations indeed provide a local analysis, but we do not expect it to be substantially influenced by different values of $\varepsilon_{msm}$, as PrEP usage levels are typically low. However, one should be aware of the fact that this analysis may no longer be appropriate for higher levels of PrEP use.
 
\end{rem}

\noindent 
\subsection{Model Calibration and numerical simulations} \label{sect:simulation_basic}
In this section, the model \eqref{basicmodel} is 
first of all calibrated using surveillance data for HIV transmission dynamics in the U.S. State of Georgia \cite{ATLAS}.  The calibrated model is then simulated using the baseline values of the parameters and initial conditions tabulated in Table \ref{tab:riskStructuredModelParameters}.
The simulations are carried out using the MATLAB ode45 solver.
To ensure that the closure conditions \eqref{closureConditions} are satisfied, the contact rates must be adjusted during time integration. For ease of notation we collect these parameters in a vector $\chi := ({\eta}_{msm},\,{\eta}_{hetf_h},\,{\alpha}_{hetf_h},\, {\alpha}_{hetf_l},\,{\xi}_{hetm})$. The input value $\overline \chi$ for $\chi$ is then defined as: 
\begin{equation}\label{minProblemRiskStructureContact}
\overline \chi = \argmin_{\chi} \sum_{j = 1}^5 (\chi_j - \overline \chi_j)^2 \, \,  \text{ subject to \eqref{closureConditions}}.
\end{equation} 
To solve \eqref{minProblemRiskStructureContact}, we use the algorithm shown in \cite{pugliese1991contact}.

\subsubsection{Model calibration and baseline}
The results obtained for the calibration of the model, using the data for total number of people living with HIV (PWH) in Georgia for the period 2017-2019, are depicted in 
Figure \ref{fig:fourCompartmentBaseline} (left panel), showing reasonable agreement. Similar results are obtained for the cumulative annual incidence of the disease in Georgia (Figure \ref{fig:fourCompartmentBaseline}, right panel). The predicted 10-year cumulative values of total HIV (baseline) incidence are given in Table \ref{tab:riskStructuredcModelResults} (``baseline" row).

\label{sect:simulation_basic_model}
\subsubsection{Simulations for PrEP spillover}
In this section, the  potential impact of PrEP spillover is assessed by simulating the model \eqref{basicmodel} using the parameter and initial values in Table \ref{tab:riskStructuredModelParameters}.  This is achieved by solving the spillover equations derived in Section \ref{sect:spillover_derivation_basic} (evaluated at baseline PrEP levels) and calculating the direct effect on HIV incidence in each risk group by solving the incidence sensitivities resulting from the administration of PrEP to a given risk subgroup \eqref{incidenceSensitivity}. Plots for the group-specific spillover effects \eqref{incidenceSensitivity} are depicted in Figure \ref{fig:fourCompartmentSensitivity}. 
%We solve the spillover equations derived in Section \ref{sect:spillover_derivation_basic} (evaluated at baseline PrEP levels) and examine the direct effect on HIV incidence in each risk group resulting from administering PrEP to a given group. The resulting spillover effects are plotted in Figure \ref{fig:fourCompartmentSensitivity}. 
The simulation results obtained, depicted in Figure \ref{fig:fourCompartmentSensitivity} (left panel, first row) show that PrEP administration to MSM has a strong direct effect on MSM incidence (specifically, this figure shows that about $30\%$ of new HIV infections are averted in the MSM population by the end of the simulation period) and no spillover effect on MSM incidence from other risk subgroups. We observe that administering PrEP to high-risk HETF has a strong direct effect on high-risk HETF. A significant spillover effect from MSM is also observed (Figure \ref{fig:fourCompartmentSensitivity}, right panel, first row). For low-risk HETF, the direct effect of PrEP is substantially smaller than the spillover effect from MSM, with the difference growing larger over time (Figure \ref{fig:fourCompartmentSensitivity}, left panel, second row). Finally, in Figure \ref{fig:fourCompartmentSensitivity} (right panel, second row), we observe that the spillover effect from high-risk HETF overtakes the direct effect after a few years and continues to grow over time. Furthermore, there is also a significant spillover effect from MSM, it appears that this spillover effect will eventually be larger than the direct effect of PrEP administration to HETM over a longer time horizon.

\par As shown by the expression \eqref{NNTequiv}, we can also express the PrEP spillover sensitivities \eqref{incidenceSensitivity} in terms of the number needed to treat (NNT). The corresponding NNTs as depicted in Figure \ref{fig:fourCompartmentNNT} reflect the same overall trends as shown previously. There are, however, several aspects worth noting. First, we note that NNTs decrease over time. In 2020, we estimate approximately 55 person-years on PrEP among MSM necessary to prevent one MSM infection; however, this is reduced by nearly 20\% to 45 by 2030. This reflects the compounding nature of HIV prevention- reductions in incidence in the present result in even larger reductions in the future. Among hi-risk HETF, we find around 1,000 person-years on PrEP necessary to prevent one infection among this group, with little change over time, with spillover effect from MSM also clearly visible. Among low-risk HETF, approximately 15,000 person-years from low-risk HETF PrEP are necessary within the cohort. In contrast, by the year end of 2030, the NNT for MSM is less than a third - hence, as shown previously, MSM spillover effects from PrEP prevent infections among low-risk HETF far more efficiently than direct PrEP administration. Finally, among HETM, we find similar dynamics as shown previously - NNT for HETM remains around 10,000 for the whole simulation period, and by 2030, the NNT is lower for high-risk HETF in reducing infections among HETM. Significant spillover effects from MSM are also observed. We remark that the NNTs calculated here are consistent with those shown in \cite{kourtis2025estimating} for the MSM population. However, we note that the studies are not directly comparable for heterosexual populations due to differences in the definition of high-risk subgroups. In \cite{kourtis2025estimating}, high-risk heterosexual subgroups are defined by assuming a 1\% annual incidence rate, substantially higher than the calculated rate for HETF-hi herein. We expect that a similarly rigid definition in the present work would result in even stronger spillover effects, and in turn, further support for prioritizing higher-risk heterosexuals in PrEP allocation.

\subsubsection{Simulations for PrEP intervention scenario}
We consider twelve total PrEP intervention scenarios, where PrEP is administered to an additional $10,000$, $25,000$, and $50,000$ individuals in the four risk subpopulations (MSM, HETF-hi, HETF-lo, and HETM)
once during the year 2020, and the simulations were ran until 2030. The goal is to estimate the total HIV incidence in the four subpopulations, over the 10-year period, following the aforementioned additional PrEP allocations. The 10-year incidence, both overall (i.e., total incidence for all risk subgroups) and for each risk subgroup, is reported for each scenario as tabulated in Table \ref{tab:riskStructuredcModelResults}. 
\par Generally, the results confirm the findings of the previous section. When PrEP is administered to MSM, aside from providing substantial incidence reductions among MSM (up to 50\%), spillover results in substantial reductions in HETF incidence and modest reductions in HETM incidence. For HETF, the situation is more nuanced. When PrEP is administered to HETF-hi, we observe significant reduction in HETF incidence, with an additional spillover benefit to HETM. In fact, the incidence reduction among HETM from spillover is larger than that obtained from direct administration of PrEP to HETM. However, PrEP administration to HETF-lo results in negligible reductions in 10-year HIV incidence among HETF and no reductions to either MSM or HETM. In this case, spillover from MSM is more effective for reducing incidence among HETF. 
\par These results demonstrate that PrEP is most effective when administered to higher risk groups (i.e., MSM and HETF-hi). When administered to lower-risk groups, PrEP offers little benefit, and similar or greater incidence reduction can be achieved by focusing on higher-risk subgroups and relying on spillover effects. These findings underscore that focusing prevention resources on those at highest risk yields a double dividend: it curbs HIV infections where transmission is most intense and, through spillover, provides protection to lower-risk groups.

\begin{table}
\scriptsize
\begin{center}
\begin{tabular}{ |p{1.2in}|p{1.4in}|p{2.in}| } 
\hline
Parameter/Variable &  Value  &  Source  \\
\hline\hline

$\Pi_{msm,hetf_h,hetf_l,hetm}$ & 3784,\, 3275,\,62222,\,63549 persons \textit{per} year & Proportional, based on current population composition and growth rate of 0.5\% \cite{CensusGA} \\ \hline
$a_{msm,hetf_h,hetf_l,hetm}$ & 94.7, 91, 43.7, 48.5 \textit{per} year & \cite{chen2022estimating,sansom2021optimal,viguerie2023assessing} \\ \hline
$\delta_{msm,hetf_h,hetf_l,hetm}$ & 1/200, 1/50, 1/100, 1/50 \textit{per} year & \cite{cdcSurvReport2019,ATLAS, centers2024hiv}  \\ \hline
$\varepsilon_{msm,hetf_h,hetf_l,hetm}$ & 0.089, 0.0061, 0, 0 (dimensionless, corresp. 11000, 1000, 0, 0 people, resp.).  (dimensionless) & \cite{cdcCoreIndicators,ATLAS}  \\ \hline
$\mu$ & 1/50 \textit{per} year & Assumed  \\ \hline
$\beta_{M}^{M}$ & 0.0008 (dimensionless) & \cite{chen2022estimating,sansom2021optimal,viguerie2023assessing}   \\ \hline
$\beta_{F}^{M}$ & 0.0003  (dimensionless) & \cite{chen2022estimating,sansom2021optimal,viguerie2023assessing}   \\ \hline
$\beta_{M}^{F}$ & 0.0004  (dimensionless)  & \cite{chen2022estimating,sansom2021optimal,viguerie2023assessing}   \\ 
\hline
$\overline{\eta}_{msm, hetf_h}$ & 0.858, 0.071 (dimensionless) & \cite{chen2022estimating,sansom2021optimal,viguerie2023assessing} ; priors used in \eqref{minProblemRiskStructureContact} to compute time-varying ${\eta}_{msm, hetf_h}$ using  \cite{pugliese1991contact} \\ \hline
$\overline{\alpha}_{hetf_h}$ & 0.06 (dimensionless) & \cite{chen2022estimating,sansom2021optimal,viguerie2023assessing} ; prior used in \eqref{minProblemRiskStructureContact} to compute time-varying ${\alpha}_{hetf_h}$ using  \cite{pugliese1991contact}   \\ \hline
$\overline{\alpha}_{hetf_l}$ & 0.01 (dimensionless) & \cite{chen2022estimating,sansom2021optimal,viguerie2023assessing} ; prior used in \eqref{minProblemRiskStructureContact} to compute time-varying ${\alpha}_{hetf_l}$ using  \cite{pugliese1991contact} \\ \hline
$\overline{\xi}_{hetm}$ & 0.005 (dimensionless) & \cite{chen2022estimating,sansom2021optimal,viguerie2023assessing} ; prior used in \eqref{minProblemRiskStructureContact} to compute time-varying ${\xi}_{hetfm}$ using  \cite{pugliese1991contact} \\ \hline
$S_{msm}^0$ & 123,418 persons &  \cite{CensusGA, conron2019adult} \ \\ \hline
$I_{msm}^0$ & 42,000 persons&  \cite{cdcSurvReport2019,ATLAS}  \ \\ \hline
$S_{hetf_h}^0$ & 243,340 persons& \cite{CensusGA, conron2019adult}, 7.7\% of HETF population \cite{chen2022estimating,viguerie2023assessing} \ \\ \hline
$I_{hetf_h}^0$ & 8,820 persons&  \cite{conron2019adult,cdcSurvReport2019,ATLAS}, 60\% of PWH among HETF  \cite{chen2022estimating,viguerie2023assessing} \ \\ \hline
$S_{lh}^0$ & 3,016,762 persons& \cite{CensusGA, conron2019adult,cdcSurvReport2019}, 92.3\% of HETF population  \cite{chen2022estimating,viguerie2023assessing}\ \\ \hline
$I_{lh}^0$ & 5,880 persons&  \cite{conron2019adult,cdcSurvReport2019,ATLAS}, 40\% of PWH among HETF  \cite{chen2022estimating,viguerie2023assessing} \ \\ \hline
$S_{hetm}^0$ & 3,138,939 persons& \cite{CensusGA, conron2019adult,cdcSurvReport2019} \ \\ \hline
$I_{hetm}^0$ & 7,000 persons& \cite{cdcSurvReport2019,ATLAS}  \\ \hline 
\end{tabular}
\caption{Baseline parameter values and initial conditions used to simulate the model \eqref{basicmodel}.}
  \label{tab:riskStructuredModelParameters} 
\end{center}
\end{table}

% {(\color{red} The caption of Figure 2 should be something like:  ``Model calibration and prediction for the total and cumulative HIV cases for the U.S. state of Georgia using HIV surveillance data for the period 2017-2019 \cite{ATLAS} (model predictions given for the period from 2020 to 2030). Left (right) panel: total (cumulative) number of people living with HIV in each of the four risk groups in Georgia, as a function of time.  Parameter and initial values used in the model calibration and prediction are as given by the baseline values in Table x.... I think the legends are clear; and we do not need to define the colours in the caption.  Here, too, the graphs are too faint.)}

\begin{figure}% [ht!]
    \centering
\includegraphics[width=.8\linewidth]{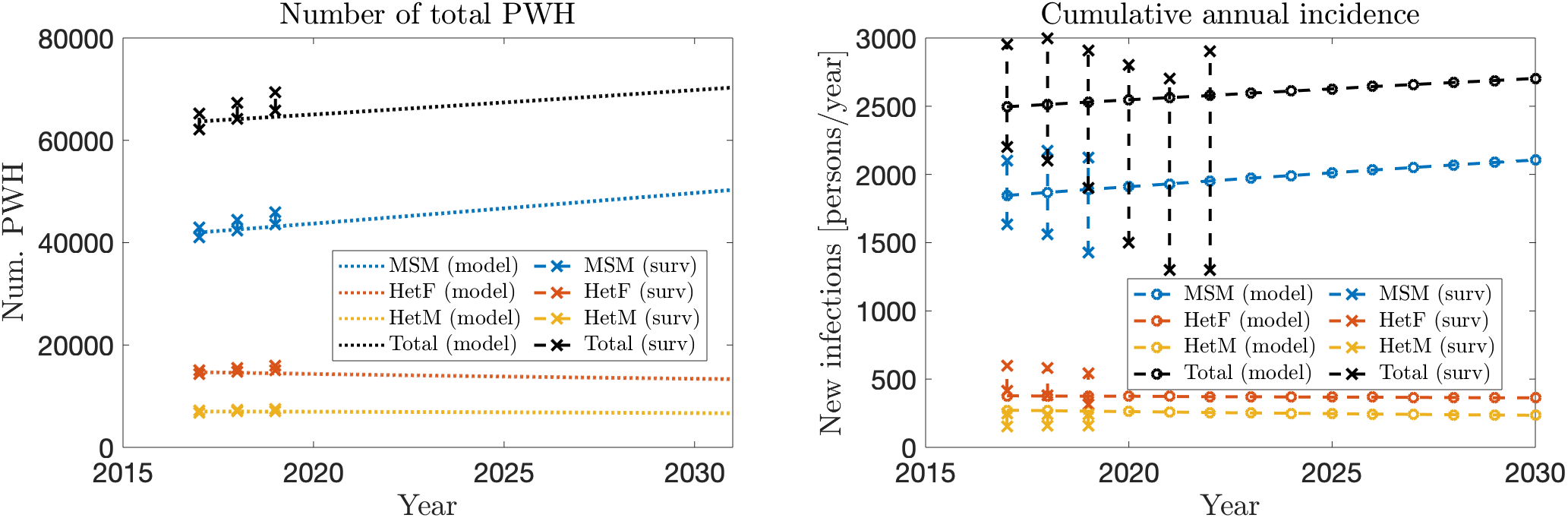}
    \caption{Model calibration and prediction for the total and cumulative HIV cases for the U.S. state of Georgia using HIV surveillance data for the period 2017-2019 \cite{ATLAS} (model predictions given for the period from 2020 to 2030). Left (right) panel: total (cumulative) number of people living with HIV in each of the four risk groups in Georgia, as a function of time.  Parameter and initial values used in the model calibration and prediction are as given by the baseline values in Table \ref{tab:riskStructuredModelParameters}.}    
\label{fig:fourCompartmentBaseline}
\end{figure}

\begin{figure}[htbp!]
    \centering
    \includegraphics[width=.8\textwidth]{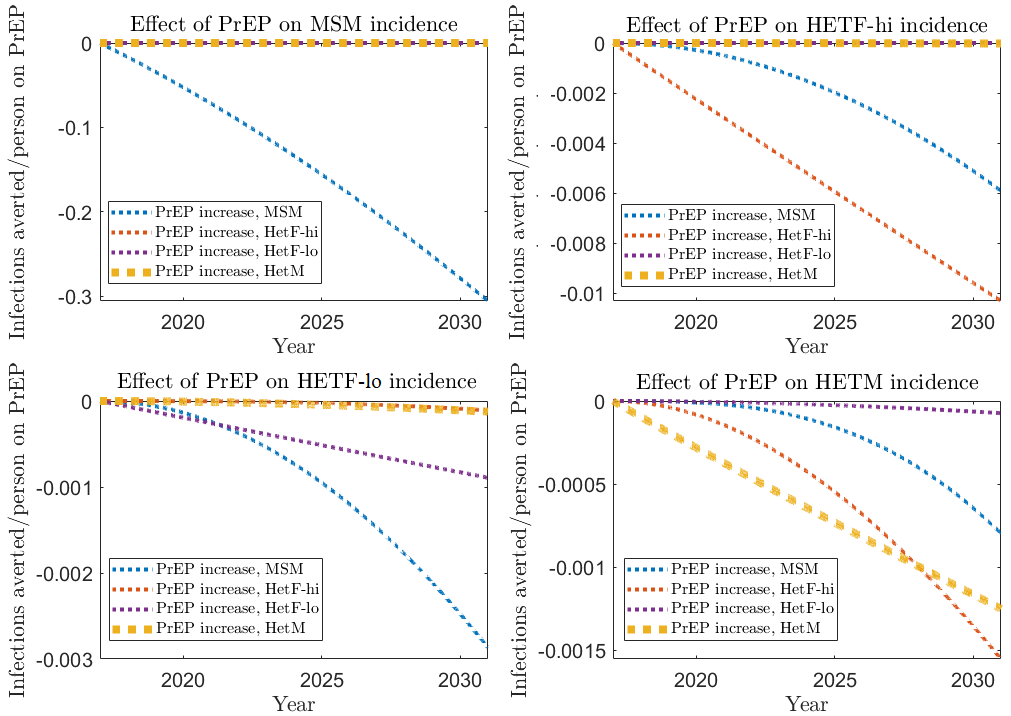}
    % \caption{Georgia simulation study: Each panel shows infections averted, for each subgroup, per additional person on PrEP in each respective subgroup, clockwise from top-left: MSM, high-risk HETF, low-risk HETF and HETM. Expanding PrEP among MSM and high-risk HETF yields the greatest direct reductions in incidence and spillover to other groups. PrEP expansion in lower-risk HETF or HETM produces smaller overall effects; targeting higher-risk groups achieves similar or greater cross-group impact than direct PrEP scale-up in lower-risk populations. {(\color{red} the plots are too faint...if possible, please increase the font sizes to make them more visible}}
    \caption{Assessment of PrEP spillover on HIV incidence in the US state of Georgia for the period from  2017 to 2030. Simulations of the model \eqref{basicmodel} showing new cases averted per person on PrEP, as a function of time. Effect of PrEP usage on HIV incidence in MSM (top-left panel), high-risk HETF (top-right panel), low-risk HETF (bottom-left panel) and HETM (bottom-right panel) populations.  Parameter and initial values used in the simulations are as given in Table \ref{tab:riskStructuredModelParameters}.}\label{fig:fourCompartmentSensitivity}
\end{figure}
% {(\color{red} Caption of Fig 4:  ``Simulations of the model () showing number needed to treat (i.e., number needed to be on PrEP) to save one new HIV case in the respective risk group, as a function of time.  MSM (top-left panel), high-risk HETF (top-right panel), low-risk HETF (bottom-left panel), and HETM (bottom-right panel). Parameter and initial values used in the simulation are as given in Table x.....here, too, the graphs are too faint)}
\begin{figure}% [ht!]
\centering
\includegraphics[width=.8\textwidth]{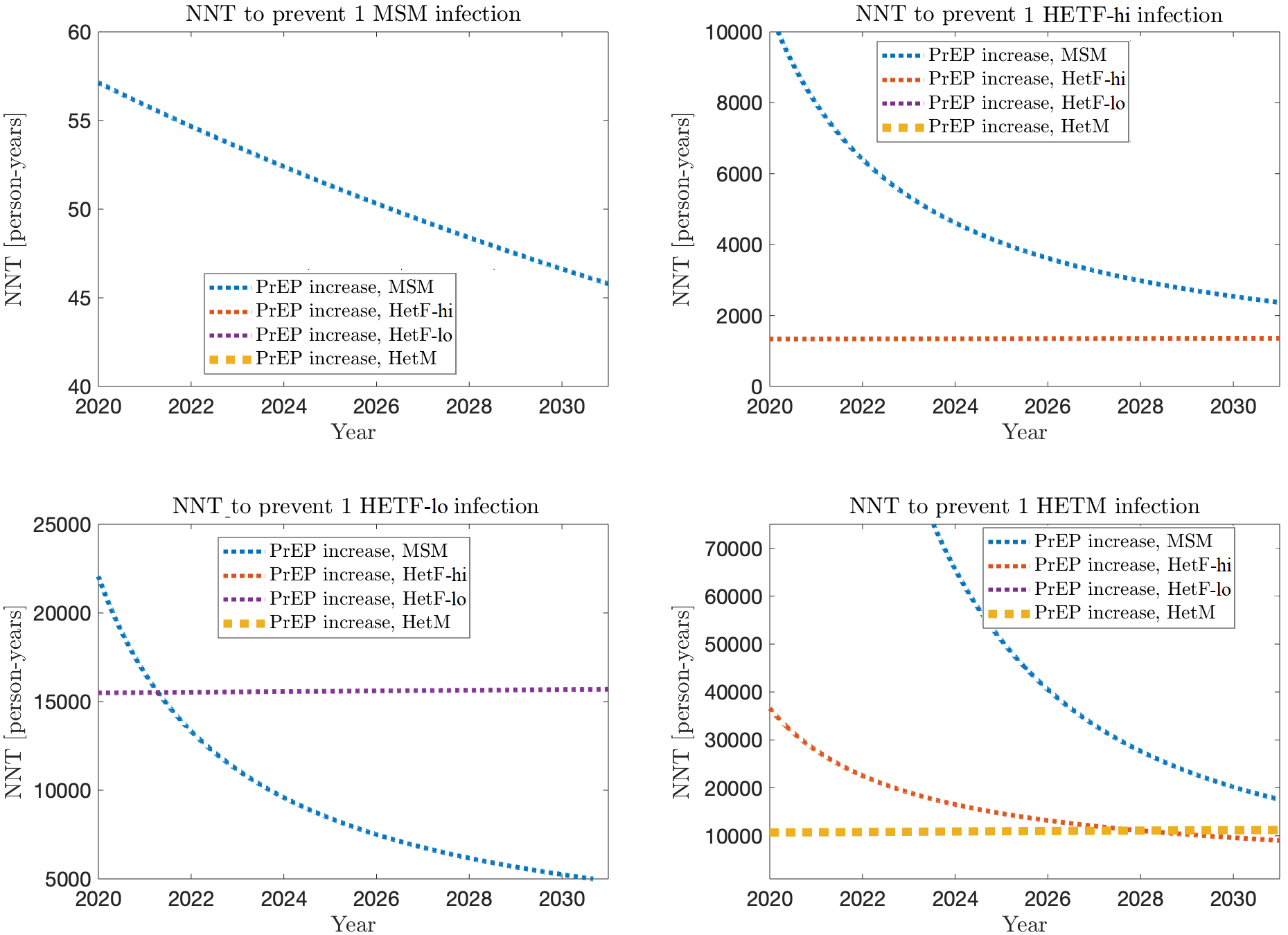}
    \caption{Simulations of the model \eqref{basicmodel} showing number needed to treat (i.e., number needed to be on PrEP) to save one new HIV case in the respective risk group, as a function of time.  MSM (top-left panel), high-risk HETF (top-right panel), low-risk HETF (bottom-left panel), and HETM (bottom-right panel). Parameter and initial values used in the simulation are as given in Table \ref{tab:riskStructuredModelParameters}.}
    \label{fig:fourCompartmentNNT}
\end{figure}

\begin{table}\scriptsize
\begin{center}
\begin{tabular}{ |p{1.in}|p{.8in}|p{.8in}|p{.8in}|p{.8in}| } 
\hline
Scenario  & 10-yr total incidence (inf. prevented) & 10-yr MSM incidence (inf. prevented)& 10-yr HETF incidence (inf. prevented)& 10-yr HETM incidence (inf. prevented) \\
\hline\hline
Baseline & 29,644 & 22,783 & 4,132 & 2,729   \\ \hline\hline
PrEP to 10,000 additional MSM & 27,242 (2,402) & 20,441 (2,342) & 4,076 (56) & 2,725  (4) \\ \hline
PrEP to 25,000 additional MSM & 23,838 (5,806) & 17,122 (5,661) & 3,998 (134) & 2,718 (11)  \\ \hline
PrEP to 50,000 additional MSM & 18,667 (10,977) & 12,078 (10,705) & 3,879 (253) & 2,710 (19)  \\ \hline\hline
PrEP to 10,000 additional HETF-hi & 29,548 (96) & 22,782 (1) & 4,047 (85) & 2,719 (10)  \\ \hline
PrEP to 25,000 additional HETF-hi & 29,405 (239) & 22,781 (2) & 3,922 (210)  & 2,702 (27)  \\ \hline
PrEP to 50,000 additional HETF-hi & 29,163 (481) & 22,777 (6) & 3,711 (421) & 2,675 (54)  \\ \hline\hline
PrEP to 10,000 additional HETF-lo & 29,636 (8) & 22,783 (0) & 4,124 (8) & 2,729 (0)  \\ \hline
PrEP to 25,000 additional HETF-lo & 29,627 (17) & 22,783 (0) & 4,115 (17)  & 2,729 (0)  \\ \hline
PrEP to 50,000 additional HETF-lo & 29,608 (36) & 22,783 (0) & 4,096 (36) & 2,729 (0)  \\ \hline\hline
PrEP to 10,000 additional HETM & 29,631 (13) & 22,783  (0) & 4,130 (2) & 2,718 (11)  \\ \hline
PrEP to 25,000 additional HETM & 29,618 (26) & 22,783  (0) & 4,129 (3)  & 2,706 (23)  \\ \hline
PrEP to 50,000 additional HETM & 29,591 (53) & 22,783  (0) & 4,127 (5) & 2,681 (48)  \\\hline \end{tabular}
\caption{Georgia simulation study: 10-year incidence and infections prevented among each transmission group, resulting from varying levels of PrEP given to each transmission group; infections prevented are given in parentheses. We see that administration to MSM and HETF-hi  yield notable reductions in HIV incidence, both within those groups and among other groups. Among HETF-lo and HETM, however, PrEP is less effective, and similar incidence reductions can be achieved via spillover effects.}
  \label{tab:riskStructuredcModelResults} 
\end{center}
\end{table}

\section{A large-scale PrEP model: the CDC HOPE model}
\label{Section4}

In this section, we consider a large-scale HIV transmission model, the HOPE model developed by the CDC (see \cite{chen2022estimating,sansom2021optimal,viguerie2023assessing,viguerie2022impact} and the associated technical reports). This is a realistic, national-level model, which considers populations stratified by age, race/ethnicity, transmission group, sex assigned at birth, HIV disease stage and care status. It also considers HIV transmission due to persons who inject drugs (PWID), approximately 7\% of annual transmissions in the US in recent years \cite{ATLAS}%, who were not considered in \eqref{basicmodel}
. In total, the HOPE model considers 273 different population groups and nearly 3,000 individual parameters. Therefore, direct derivation of spillover equations, as was performed for \eqref{basicmodel}, is neither practical nor feasible. 

Numerical approaches are thus required. We propose to use a sensitivity analysis technique, the so-called \textit{Sobol decomposition method} \cite{sobol2001global}, to measure the influence of uncertain input parameters on model outcomes of interest. We thus change perspective with respect to the previous sections: the model outcomes are considered to be uncertain, resulting from the propagation of uncertain input parameters, modeled as random variables.
The Sobol method is then based on a decomposition of the output variance and returns indicators, the \textit{Sobol indices}, that quantify the contribution of each input parameter to the variance, see Section \ref{sect:sobol}. % \textcolor{purple}{We consider the \textit{total Sobol indices}, which quantify the contributions of each parameter combined with the others.}
% Note that the variance is one way to measure the uncertainty in the model prediction and other sensitivity methodologies might be suitable in different cases (see e.g.~\cite{DaVeiga:2021}). 

To study the spillover effects for the HOPE model, we focus on the sensitivity of the HIV incidence in the four transmission groups MSM, HETF, HETM, and PWID to PrEP delivery in each of them. As stated above, the HOPE model also considers other stratifications% (e.g.~age and race/ethnicity, reaching 273 population groups)
; as such, for the purpose of this analysis, we consider aggregations of all MSM populations, all HETF populations, etc. % This step will be made clearer later in the section.
Thus, let us denote by $\mathcal{I}_j$ a set of indices $i_j$ denoting populations subgroups for each of the macro transmission group $j \in  \{ \text{MSM, HETF, HETM, PWID}\}$. We assume the fraction of individuals in each subgroup who are on PrEP, denoted by $\varepsilon_{i_j}$, to be modeled as follows 
\begin{equation}\label{prepVariation}
    \varepsilon_{i_j} = \varepsilon_{i_j}^0\left(1+\theta_j\right), \,\, i_j \in \mathcal{I}_j, \, j=\{ \text{MSM, HETF, HETM, PWID} \}, 
\end{equation}
where $\varepsilon_{i_j}^0$ is the level of PrEP coverage in group $i_j$ at baseline and $\theta_j$ is a random parameter. 
%Here, we use the index $j$ to denote the transmission group (MSM, HETF, HETM, and PWID) and the index $k$ to refer to all additional sub-stratifications within the given transmission group. 
We assume $\theta_j$,  $j \in \{ \text{MSM, HETF, HETM, PWID} \}$ to be independent and uniformly distributed, taking values in the interval $\Gamma_j \subset \mathbb{R}$, with $\rho_j(\theta_j):\Gamma_j \to \lbrack 0,\,\infty)$ being the corresponding probability density functions (pdf). 
Then, the corresponding random vector $\boldtheta{} = [\theta_{msm},\theta_{hetf}, \theta_{hetm},\theta_{pwid}]$ takes values in  $\Gamma=\Gamma_{msm}\times \Gamma_{hetf} \times \Gamma_{hetm} \times \Gamma_{pwid} \subset \mathbb{R}^4$ and its joint pdf reads:
\begin{align}\label{jointPDFTheta}
\boldsymbol{\rho}(\boldtheta{}) = \prod_{j \in \{ \text{MSM, HETF, HETM, PWID} \}} \rho_j(\theta_j) \qquad \forall \boldtheta{} \in\Gamma. 
\end{align}
We apply the same multiplicative adjustment $\theta_j$ across all subpopulations within each transmission group to avoid overparameterization and maintain interpretability. For example, all MSM subgroups, regardless of age or race, receive the same proportional change in PrEP uptake.

Further, let us denote by $y$ a generic outcome of interest of the HOPE model such that 
\begin{align}\label{defineFunctional}
    y : \Gamma \mapsto \mathbb{R}, \quad y(\boldtheta{}) = J\left(\boldx{}(t,\boldtheta{})\right),
\end{align}
where $J$ is a bounded and continuous  operator that acts on the model state $\boldx{} = \boldx{}(t,\boldtheta{})\in \mathbb{R}^n$, with $n$ = 273. In the context of the present work, $y$ represents the annual HIV incidence among MSM, HETF, HETM, and PWID macro populations. Hence, the annual incidence in the $j$-th group $\lambda^{year}_j$ is defined by summing up the annual incidence $\lambda_{i_j}(\boldtheta{})^{year}$ \eqref{annualIncidenceEq} % (recall \eqref{hivIncidenceEq}) 
in each subgroup over the yeay:
\begin{equation}\label{eq:annual_incidence}
   \lambda^{year}_j (\boldtheta{} ) = \sum_{i_j = 1}^{\lvert \mathcal{I}_j \rvert}  \lambda_{i_j}^{year}(\boldtheta{}) 
\end{equation}
%define the vector-valued function $\boldsymbol{y}:\Gamma \to \mathbb{R}^d $ to account for $d$ outcomes of interest, whose $i$-th entry is given by:
%\begin{align}\label{defineFunctional}
%    y_i(\boldtheta{}) &= J_i\left(\boldx{}(t,\boldtheta{})\right),
%\end{align}
%with $J_i$ a bounded and continuous operator, possibly nonlinear. \textcolor{red}{In the context of the present work, the $J_i$ may represent an outcome of interest such as, for example, the annual HIV incidence among a population in a given year. Note that in this case $\boldsymbol{y}(\boldtheta{})$ might not depend directly on time $t$. 
%One may introduce time-dependent quantities through the definitions of the $J_i$, for example two distinct $J_i$ may describe the annual HIV incidence in two different years. Such outputs are, in general, of more interest than $\boldx{}$ itself. }

%\subsection{Construction of surrogate models}

\subsection{Sobol sensitivity indices}\label{sect:sobol}

In this section we briefly illustrate the basics of the computation of Sobol indices. We resort to the methodology based on the generalized Polynomial Chaos Expansion (gPCE) (see e.g.\ \cite{ernst2012,wiener1938,XiuKarniadakins2002:gPC}) that represents the random model outcome as a polynomial series, where the polynomials retain the uncertainty and the coefficients are deterministic. Then, the Sobol indices can be computed by algebraic manipulations of those coefficients, see \cite{sudret2008global}. For an overview on other options we refer to e.g.~\cite{DaVeiga:2021,piazzola2021note,saltelli:2008}. 

Let us introduce a generic set $\mathcal{M} \in \mathbb{N}^{4} $ of multi-indices % $\boldsymbol{p} = \lbrack p_1,\,p_2,\,p_3,\,p_4 \rbrack$. Let 
and let $\mathcal{P}_{\boldsymbol{p}}=\prod_{k=1}^4 P_{p_k} (\theta_k)$, with $\boldsymbol{p} \in \mathcal{M}$, denote a multivariate polynomial, given by the product of univariate polynomials of degree $p_k$ (note that $P_{0}=1$), that are $\rho_k-$orthonormal, i.e.:
\begin{equation}\label{orthoCondition1}
\int_{\Gamma} \mathcal{P}_{\boldsymbol{p}}(\boldtheta{}) \mathcal{P}_{\boldsymbol{q}}(\boldtheta{})\, \boldsymbol{\rho}(\boldtheta{}) \textrm{d} \boldtheta{} = \delta_{\boldsymbol{p} \boldsymbol{q}},
\end{equation}
where $\delta_{\boldsymbol{p} \boldsymbol{q}}$ denotes the Kronecker Delta. 
Then, the gPCE is an approximation $\widetilde{y}$ of $y$ in \eqref{defineFunctional} of the following form
%; its $i$-th component, generically denoted by $\widetilde{y}$, is given by }
%output contained in $\tildey$, denoted $\widetilde{y}=\widetilde{y}_i$, in terms of its \textit{generalized Polynomial Chaos Expansion (gPCE)} \cite{sudret2008global,xiu2002wiener}. This refers to a modal expansion of $\widetilde{y}$,  truncated over a finite number of terms, such that:
\begin{align}\label{PCE}
    \widetilde{y}(\boldtheta{}) = \sum_{\boldsymbol{p}\in \mathcal{M} } d_{\boldsymbol{p}} \mathcal{P}_{\boldsymbol{p}} (\boldtheta{}).
\end{align}
Assuming uniformly distributed $\theta_k$, the $P_{p_k}(\theta_k)$ are the corresponding \textit{Legendre polynomials} of degree $p_k$, according to the Askey scheme (see \cite{XiuKarniadakins2002:gPC}). The accuracy of the expansion depends on the choice of $\mathcal{M}$ that dictates which polynomials to include in the expansion and on the computation of the coefficients $d_{\boldsymbol{p}}$. In this work we use \textit{The Sparse Grids Matlab kit} \cite{sgmk:github,piazzola.tamellini:SGK}, a Matlab package that provides an implementation of the gPCE based on sparse grids collocation \cite{piazzola.tamellini:SGK}. % Hence, the accuracy of the gPCE and of the Sobol indices is dictated by the one of the sparse-grid approximation. 
% In general, one has to choose a suitable set of interpolation points.  Under specific assumptions on the $\rho_i$ (e.g. uniform or Gaussian), procedures exist for selecting an optimal set of sample points for a desired order of accuracy \cite{nobile2008sparse, ernst2018convergence, piazzola.tamellini:SGK}

The Sobol indices can then be computed directly from the expansion coefficients as briefly recalled in the following (see \cite{sudret2008global} for the details). Let $|\mathcal{M}|$ be the cardinality of $\mathcal{M}$ and assume a well-defined ordering on $\mathcal{M}$ such that $\boldsymbol{p}^i$ refers to the $i$-th multi-index; assume for the purposes of notation that this ordering is initialized at 0. 
%From the orthonormality of the $\mathcal{P}_{\boldsymbol{p}_i}$:
%\begin{equation}\label{orthoCondition1}
%\int_{\Gamma} \mathcal{P}_{\boldsymbol{p}_i}(\boldtheta{}) \mathcal{P}_{\boldsymbol{p}_j}(\boldtheta{})\, d\boldtheta{} = \delta_{ij},
%\end{equation}
%where $\delta_{ij}$ denotes the Kronecker Delta. Furthermore, since $\boldsymbol{p}_0=\lbrack 0,\,0,\,...,\,0\rbrack \in \mathcal{M}$ and $P_{p_k}=1$ for $p_k=0$:
%\begin{align}\label{orthoCondition2}
%\int_{\Gamma} \mathcal{P}_{\boldsymbol{p}_i} \, d\boldtheta{} = 0,\, i\neq 0.
%\end{align}
% Considering the expansion \eqref{PCE} and \eqref{orthoCondition1}-\eqref{orthoCondition2}, 
Recalling \eqref{orthoCondition1} one may promptly verify the following:
\begin{equation*}
\text{E}\left\lbrack \widetilde{y}(\boldtheta{})\right\rbrack = d_{\boldsymbol{p}^0}, \qquad \text{Var}\left\lbrack \widetilde{y}(\boldtheta{})\right\rbrack= \text{E}\left\lbrack \widetilde{y}(\boldtheta{})^2\right\rbrack -\text{E}\left\lbrack \widetilde{y}(\boldtheta{})\right\rbrack^2=  \sum_{i=1}^{|\mathcal{M}|-1} d_{\boldsymbol{p}^i}^2.
\end{equation*}
Then, let $\mathcal{M}_k = \{ \boldsymbol{p}\in \mathcal{M} \,\,|\,\, \boldsymbol{p}_k \neq 0 \}$, that is, the subset of multi-indices such that the degree of the approximating polynomial corresponding to the variable $\theta_k$ is nonzero. The $k$-th Total Sobol Index $\mathcal{S}^T_k$ quantifies the contribution of the $k$-th parameter combined with the others to the total variance of $\widetilde{y}$  and is given by:
\begin{align}\label{totalSobolIndex}
    \mathcal{S}^T_k = \dfrac{ \sum_{\boldsymbol{p}\in\mathcal{M}_k} d_{\boldsymbol{p}}^2 }{\sum_{i=1}^{|\mathcal{M}|-1} d_{\boldsymbol{p}^i}^2 }.
\end{align}
 
%The Sobol indices \eqref{totalSobolIndex} and \eqref{firstOrderSobolIndex} describe the contributions of the variable $\theta_k$ to the overall variance observed in $\widetilde{y}$ over $\Gamma$ \cite{sobol2001global}. The difference between the two is that the Total Sobol index \eqref{totalSobolIndex} contains all the contributions of $\theta_k$ to the observed variance, including combinations of $\theta_k$ with the other $\theta_i$. In contrast, the first-order Sobol index \eqref{firstOrderSobolIndex} contains \textit{only} the contributions of $\theta_k$ alone, and not any additional interactions between $\theta_k$ and other $\theta_i$.

\begin{rem}
    We remark that the Sobol indices are defined independently of the gPCE \cite{sobol2001global}. The decision to introduce them here directly through the gPCE was a practical one, as the employed software package (The Sparse Grids Matlab kit) implements the formulas reported above. This method is efficient compared to alternatives, such as Monte Carlo approaches (see \cite{DaVeiga:2021}). The computational cost is dictated by the number of simulations required for the computation of the gPCE, which can be contained when using the sparse grids approach, see \cite{piazzola.tamellini:SGK} and references therein.
\end{rem} 

\begin{rem}
    The numerical simulation of the HOPE model requires suitable schemes that have been optimized and are available as black-box solvers \cite{ viguerie2023assessing,viguerie2022impact}. As hinted above, a local sensitivity analysis that requires manipulating the system equations would not be possible here, as it would also require code modification. The Sobol sensitivity analysis, however, allows one to call the numerical solver as a black-box for different values of the input parameters, without modifying the code, making the implementation of the global sensitivity analysis straightforward.  
\end{rem}

\subsection{Simulation study: U.S. national-level model}
% From the above sections, we note that we have all of the necessary tools to investigate Spillover 
% effects in the HOPE model numerically. We consider PrEP distribution to MSM, HETF, HETM, and PWID, respectively. We assume that the PrEP distribution rates to each group are independently distributed random variables. 
Over the intervention period 2023-2030, the sample spaces for PrEP coverage in each group corresponds to the following numbers of persons PrEP in each group:
\begin{align*}
     \Gamma_{msm} & = \lbrack 385,000,\,775,000\rbrack, \quad  && \Gamma_{hetf} = \lbrack 31,600,\,465,000\rbrack, \\
     \Gamma_{hetm} & = \lbrack 29,800,\,435,000\rbrack, \quad &&\Gamma_{pwid} = \lbrack 10,400,\,140,000\rbrack,
\end{align*}
with these numbers based on PrEP eligibility guidelines \cite{centers2018preexposure}. We note that, while HOPE model inputs are provided in terms of percent-coverage in each group, we elect to report overall numbers here to avoid ambiguities arising from the definition of PrEP-eligibility, which has been subject to revision in recent years \cite{kourtis2025estimating, zhu2025trends}. 

For each sampled PrEP uptake level, $\boldsymbol{\theta}^{(i)} \in \Gamma$, we run the HOPE model, generating four corresponding model outputs $y^{(i)}_j$ giving the annual HIV incidence \eqref{eq:annual_incidence} in each transmission group, i.e.\ $y_j^{(i)} = \lambda_j^{\text{year}} (\boldsymbol{\theta}^{(i)})$. We stress that we \textit{only} vary PrEP-related parameters; all other parameters in HOPE are kept fixed. Thus, variation in the output ensemble is entirely attributable to differences in levels of PrEP across the ensemble members, and the Sobol indices allow us to quantify the direct and spillover effects of PrEP when given to different transmission groups. We note that, as a variance-based measure, quantifying spillover with Sobol indices no longer provides the direct in terms of NNT offered by the methods used for the model \eqref{basicmodel}.

In Fig. \ref{fig:hopeSobol} we show the computed total Sobol indices (cf.~\eqref{totalSobolIndex}) for each intervention year. We observe similar dynamics as for the model \eqref{basicmodel}. In particular, spillover from other groups had little or no effect on MSM incidence. For HETF, initially, direct PrEP uptake among HETF has the most influence on HETF incidence; however, this changes over time. By the end of the time period, PrEP uptake among MSM is responsible for over 80\% of the variance in HETF incidence. For HETM, we also observe significant spillover effects from MSM, however, over a longer time scale as compared to HETF; for the majority of the time period, the direct effect of PrEP uptake among HETM is the most important factor in HETM incidence. However, PrEP uptake among MSM becomes more important for HETM incidence as over time. Lastly, we note that no notable spillover effects were observed among PWID (not considered in the previous model).

\par The results show that PCE-based Sobol analysis can provide a suitable, computationally tractable numerical approach for quantifying spillover effects in settings where direct mathematical analysis is difficult or impossible. Broadly, the results are qualitatively similar to the analytic results from model \eqref{basicmodel} in Section \eqref{sect:simulation_basic}.
 However, as the Sobol indices are constructed by considering variance in HIV incidence over the entire range of PrEP uptake levels, they will not change when baseline PrEP uptake levels are varied. This contrasts with what happens with the local quantities derived in Section \ref{sect:spillover_basic}, where we expect the calculated spillover and NNT to be different for different levels of baseline PrEP use. Unlike the quantities obtained by solving the spillover systems in Section \ref{sect:spillover_basic}, the Sobol indices are unitless (cf.~\eqref{totalSobolIndex}). While they have a physical meaning in terms of percentage of the variance explained by each parameter, this is independent of the actual size of variation. Therefore, directly relating Sobol indices to physical quantities, such as number of infections prevented, is not straightforward; the parameter space reduction / aggregation techniques proposed in \cite{viguerie2024InputOutput} may provide a possible strategy towards recovering such an interpretation.  

\begin{figure}[ht!]
    \centering
    \includegraphics[width=.8\textwidth]{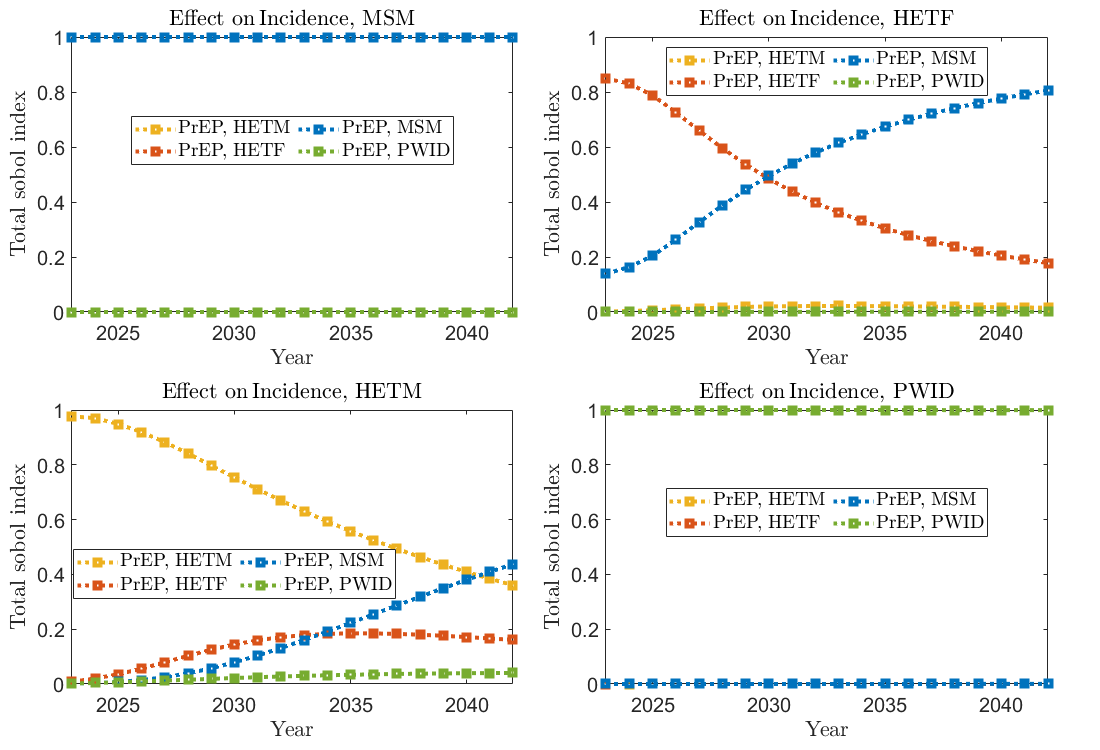}
    \caption{HOPE model, simulation study: Total Sobol indices.} %These indices (normalized from 0 to 1) explain the percentage of variance in the simulated incidence in each transmission group explained by levels of PrEP among the different transmission groups. }
    \label{fig:hopeSobol}
\end{figure}

\section{Conclusion}
\label{Section5}
In this work, we explored the effects of PrEP on HIV incidence among distinct HIV transmission groups. In particular, we observed how giving PrEP to a particular transmission group affects HIV incidence not only in that group, but in other transmission groups as well, the so-called ``spillover effect''. We introduced and analyzed a simplified model of sexual HIV transmission considering MSM, HETF-lo and HETF-hi, and HETM. We demonstrated that, from this model, it is possible to derive additional systems quantifying the spillover effect. Furthermore, we formally connected the spillover system to the NNT. Simulations showed that interventions leveraging spillover effects, prioritizing specific, higher-risk populations can provide an effective strategy for reducing HIV incidence in distinct, lower-risk populations- more effective than direct PrEP uptake among those low-risk groups.
\par We then showed that, while analytic approaches used may be untenable for large-scale models of HIV transmission, one can still analyze spillover effects on such models through numerical techniques. We performed a variance-based Sobol sensitivity analysis on the CDC-maintained HOPE model to assess the influence of PrEP delivery across transmission groups on annual HIV incidence. The results of this analysis are consistent with our findings from the simplified model, establishing both the validity of the simplified model in studying the effects of PrEP delivery to transmission groups, and the feasibility of PCE-based Sobol analysis as a practical tool for studying spillover effects in applied settings.
\par The current work has several limitations. For simplicity, we considered PrEP as completely effective, and perfect adherence among individuals on PrEP. In practice, this may not be the case, and a more sophisticated modeling of PrEP effectiveness and adherence may further improve the usefulness of the model. Furthermore, the analysis considered only two populations (Georgia and US); Accordingly, caution is necessary when generalizing our results and insights to other populations. Accordingly, extending and applying the techniques discussed herein to more complex models, or different populations/jurisdictions, is an important direction for future work. 
\par The analytic and numerical methods discussed in the present have broad applicability beyond the example of PrEP delivery. Most immediately, similar spillover effects are found in other aspects of HIV prevention and care, and the introduced framework can be naturally applied to such cases  \cite{nikolopoulos2025evaluation}. Outside of HIV, nonlinear spillover effects are found in multipopulation and multiscale models more generally \cite{bertaglia2024modelling,buffoni2025revisiting}; application to, and extensions of, the techniques discussed here may be useful in furthering our understanding of such dynamics more generally. For policymakers, our analysis demonstrates that the most powerful path to population-wide HIV prevention begins not with uniform coverage, but with precision: directing PrEP to the highest-risk groups not only minimizes total infections and, through nonlinear spillover dynamics, provides the strongest safeguard for lower risk communities. This is particularly important given the cost of PrEP.

\section*{Acknowledgments}
The authors would like to thank John Brooks, Nidhi Khurana, and Paul Farnham for their constructive input. ABG acknowledges the support, in part, of the National Science Foundation (Grant Number: DMS-2052363;
transferred to DMS-2330801).

%Bibliography
%\bibliographystyle{siamplain}  
\bibliographystyle{unsrt}  
\bibliography{references}

\clearpage
\appendix

% ============================
% BEGIN SUPPLEMENTARY MATERIAL
% ============================

\section{Local asymptotic stability of DFE}\label{localStabComplete}
 In this supplement, we provide the local asymptotic stability analysis of the disease-free equilibrium ${\mathcal E}_0$ of the model \eqref{basicmodel}, discussed in Section \ref{reproduction_number_basicmodel} in the main text. The next generation operator method \cite{diekmann1990definition_,van2002reproduction_} can then be used to analyze the local asymptotic stability property of DFE. 
 
First, let us recall that the \textit{control reproduction number} $\mathbb{R}_c$ for model \eqref{basicmodel} is defined as $ {\mathbb R}_{c}  = \rho(F V^{-1})$, where $\rho$ denotes the spectral radius and $F$ denotes the matrix of new infection terms and $V$ the matrix of the transition terms. 
They are given by 
\begin{equation}\label{ngm_F_riskmodelMainText}
F =  \begin{bmatrix}
f_{11} & f_{12} & f_{13} & 0\\
f_{21} & 0 & 0 & f_{24}\\
f_{31} & 0 & 0 & f_{34}\\
0 & f_{42} & f_{43} & 0\\
\end{bmatrix}
\text{\quad and \quad}
V = \begin{bmatrix}
K_{1} & 0 & 0 & 0\\
0 & K_{2} & 0 & 0\\
0 & 0 & K_{3} & 0\\
0 & 0 & 0 & K_{4}\\
\end{bmatrix}.
\end{equation}
where (noting that 
$N_{msm}^{*} = S_{msm}^{*}, N_{hetfh}^{*} = S_{hetfh}^{*}, N_{hetfl}^{*} = S_{hetfl}^{*}, N_{hetm}^{*} = S_{hetm}^{*}$),
{\small{\begin{alignat*}{2}
f_{11} &= \dfrac{\left[a_{msm}\,(1-\varepsilon_{msm}) \eta_{msm}\,\beta_{M}^{M}\right] S_{msm}^{*}}{N_{msm}^{*}},\, 
 f_{12} = \dfrac{\left[a_{msm}\,(1-\varepsilon_{msm})\eta_{hetf_h}\beta_{F}^{M}\right] S_{msm}^{*}}{N_{hetf_h}^{*}},\ \\
f_{13}& =  \dfrac{\left[a_{msm}\,(1-\varepsilon_{msm}) (1 -\eta_{msm} - \eta_{hetf_h})\beta_{F}^{M}\right] S_{msm}^{*}}{N_{hetf_l}^{*}},\ \\
f_{21} &= \dfrac{\left[a_{hetf_h}\,(1-\varepsilon_{hetf_h})\,\alpha_{hetf_h}\,\beta_{M}^{F}\right] S_{hetf_h}^{*}}{N_{msm}^{*}},\,f_{24} = \dfrac{\left[a_{hetf_h}\,(1-\varepsilon_{hetf_h})(1 - \alpha_{hetf_h})\beta_{M}^{F}\right] S_{hetf_h}^{*}}{N_{hetm}^{*}},\ \\
f_{31} &= \dfrac{\left[a_{hetf_l}\,(1-\varepsilon_{hetf_l})\,\alpha_{hetf_l}\,\beta_{M}^{F}\right] S_{hetf_l}^{*}}{N_{msm}^{*}},\,f_{34} = \dfrac{\left[a_{hetf_l}\,(1-\varepsilon_{hetf_l})(1 - \alpha_{hetf_l})\beta_{M}^{F}\right] S_{hetf_l}^{*}}{N_{hetm}^{*}},\ \\ 
f_{42} &= \dfrac{\left[a_{hetm}\,(1-\varepsilon_{hetm})\xi_{hetm}\,\beta_{F}^{M}\right] S_{hetm}^{*}}{N_{hetf_h}^{*}},\,f_{43} = \dfrac{\left[a_{hetm}\,(1-\varepsilon_{hetm})(1 - \xi_{hetm})\,\beta_{F}^{M}\right] S_{hetm}^{*}}{N_{hetf_l}^{*}},
\end{alignat*}
and,
\[
K_{1} = \mu + \delta_{msm}, \quad K_{2} = \mu + \delta_{hetf_h}, \quad K_{3} = \mu + \delta_{hetf_l}, \quad K_{4} = \mu + \delta_{hetm}.
\]}}

Then, the control reproduction number ${\mathbb R}_{c}$ turns out to be given by the following expression
\begin{equation*}
{\mathbb R}_{c} = \max\{
{\mathbb R}_{c_1},
{\mathbb R}_{c_2},
{\mathbb R}_{c_3},
{\mathbb R}_{c_4}\},
\end{equation*}
where 
\begin{alignat*}{2}
    {\mathbb R}_{c_1} & = \dfrac{f_{11}}{4\,K_{1}} - H_1 - H_3, \quad {\mathbb R}_{c_2} & = \dfrac{f_{11}}{4\,K_{1}} + H_1 - H_3, \\ 
    {\mathbb R}_{c_3} & = \dfrac{f_{11}}{4\,K_{1}} + H_3 - H_2, \quad {\mathbb R}_{c_4} & = \dfrac{f_{11}}{4\,K_{1}} + H_2 + H_3.
 \end{alignat*}  
Furthermore, we have 
\begin{eqnarray*}
\label{hdefns}
H_1 &=&\frac{\sqrt{12\,{H_5}^{1/2}\,H_8 + 12\,{{H_5}^{1/2}\,H_6 }^{1/3} \,H_9 - {H_5}^{1/2}\,{H_9}^2 - 9\,{H_5}^{1/2}\,{H_6 }^{2/3} - H_4 }}{6\,{H_5 }^{1/4} \,{H_6 }^{1/6}},\ \\
H_2 &=&\dfrac{\sqrt{12\,{H_5}^{1/2}\,H_8 + 12\,{{H_5}^{1/2}\,H_6 }^{1/3} \,H_9 + H_4 - {H_5}^{1/2}\,{H_9}^2 - 9\,{H_5}^{1/2}\,{H_6 }^{2/3}}}{6\,{H_5 }^{1/4} \,{H_6 }^{1/6}},\ \\
H_3 &=&\dfrac{{H_5}^{1/2}}{6\,{H_6 }^{1/6} },\,\,\,
H_4 =3\,\sqrt{6}\,H_{10} \,\sqrt{{3}^{3/2}\,H_7 + 27\,{H_{10}}^2 - 2\,{H_9 }^3 - 72\,H_8 \,H_9},\ \\
H_5 &=&{H_9}^2 + 9\,{H_6}^{2/3} + 6\,H_9 \,{H_6}^{1/3} + \dfrac{12\,H_{11}}{K_1 \,K_2 \,K_3 \,K_4} - \dfrac{9\,{f_{11}}^4 }{64\,{K_1}^4}\ \\
&+& \dfrac{3\,f_{11}}{{K_1}^2 \,K_2 \,K_3 \,K_4}\Big(H_{13} -\dfrac{{f_{11}} \,H_{12}}{4\,{K_1}}\Big),\ \\
H_6 &=&\dfrac{\sqrt{3}\,H_7 }{18} + \dfrac{{H_{10}}^2}{2} - \dfrac{4\,H_8\,H_9 }{3} - \dfrac{{H_9}^3}{27},\ \\
H_7 &=&\sqrt{256{H_8}^3 +128{H_8}^2{H_9}^2+27{H_{10}}^4+16H_8{H_9}^4-144H_8H_9{H_{10}}^2}-4{H_9}^3{H_{10}}^2,\ \\
H_8 &=&\dfrac{3\,{f_{11} }^4}{256\,{K_1}^4} + \dfrac{{f_{11}}^2 \,H_{12} }{16\,{K_1}^3 \,K_2 \,K_3 \,K_4} - \dfrac{f_{11} \,H_{13}}{4\,{K_1}^2 \,K_2 \,K_3 \,K_4} - \dfrac{H_{11} }{K_1 \,K_2 \,K_3 \,K_4}, \ \\
H_9 &=&\frac{3\,{f_{11}}^2 }{8\,{K_1}^2}+\frac{H_{12} }{K_1 \,K_2 \,K_3 \,K_4},\,\,\,
H_{10} = \dfrac{{f_{11}}^3}{8\,{K_1 }^3 } + \dfrac{f_{11} \,H_{12}}{2\,{K_1}^2 \,K_2 \,K_3 \,K_4} - \dfrac{H_{13}}{K_1 \,K_2 \,K_3 \,K_4 },\ \\
H_{11} &=& f_{12} \,f_{21} \,f_{34} \,f_{43} + f_{13} \,f_{24} \,f_{31} \,f_{42} - f_{12} \,f_{24} \,f_{31} \,f_{43} - f_{13} \,f_{21} \,f_{34} \,f_{42},\ \\
H_{12} &=& K_3 \,K_4 \,f_{12} \,f_{21} + K_2 \,K_4 \,f_{13} \,f_{31} + K_1 \,K_3 \,f_{24} \,f_{42} + K_1 \,K_2 \,f_{34} \,f_{43}, \ \\
H_{13} &=& K_3 \,f_{11} \,f_{24} \,f_{42} + K_2 \,f_{11} \,f_{34} \,f_{43}.
\end{eqnarray*}

Finally, for completeness, we restate Theorem \ref{las_basicmodel}.
\begin{thmrestate}{las_basicmodel} The disease-free equilibrium ${\mathcal E}_0$ of the model \eqref{basicmodel} is locally asymptotically stable if ${\mathbb R}_{c} < 1$, and unstable if ${\mathbb R}_{c} > 1$. 
\end{thmrestate}
\noindent
The epidemiological implication of Theorem \ref{las_basicmodel} is that a small influx of HIV-infected
individuals will not generate a large outbreak in the community if the associated control reproduction number (${\mathbb R}_{c}<1$)
is brought to, and maintained at a, value less than unity.
\section{Global asymptotic stability of DFE: special case}
\label{GAS_DFE_Supp_Mat}
We extend the result in Theorem  \ref{las_basicmodel} to prove the global asymptotic stability of the disease-free
equilibrium for a special case of the model \eqref{basicmodel} in the
absence of disease-induced mortality (i.e., $\delta_{msm} = \delta_{hetf_h} = \delta_{hetf_l} = \delta_{hetm} = 0$). By adding all the equations corresponding to the rate of change of the populations in the susceptible and infected compartments (i.e., $S_{j}(t)$ and $I_{j}(t)$), it follows that:
\begin{equation}\label{limitFullPop}
    \dfrac{d }{dt}\left\lbrack S_j(t) + I_j(t) \right\rbrack = \dfrac{d N_j(t)}{dt} = \Pi_j - \mu N_j,
\end{equation}
with $j \in \{msm, hetf_h, hetf_l, hetm\}$. It thus follows that
\begin{equation}\label{popLimit}
    \lim_{t \to \infty} N_j(t) = \dfrac{\Pi_j}{\mu}.
\end{equation}
From now on, the total sexually active population in group $j$ at time $t$, denoted
by $N_j(t)$, will be replaced by its limiting value, $N_j^{*}=\dfrac{\Pi_j}{\mu}$ (which means that the standard incidence formulation is now replaced by a mass action incidence).
\noindent We define the following
feasible region for this special case ($S_j^*$ defined in \eqref{Eqbm_point_1}):
\begin{align}\begin{split}\label{invariantRegion}
\Omega^{*} =\Big\{ & \left(S_{msm}, I_{msm}, S_{hetf_h}, I_{hetf_h}, S_{hetf_l}, I_{hetf_l}, S_{hetm} , I_{hetm} \right) \in \Omega : \\ 
& S_{msm} \leq S_{msm}^{*},\, S_{hetf_h} \leq S_{hetf_h}^{*},\, S_{hetf_l} \leq S_{hetf_l}^{*}, \, S_{hetm} \leq S_{hetm}^{*}\Big\},
\end{split}\end{align} and the following threshold quantity:
$${\widehat{{\mathbb R}}}_{c}={\mathbb R}_{c}|_{\delta_{msm}=\delta_{hetf_h}=\delta_{hetf_l}=\delta_{hetm}=0}.$$
We claim the following result:
% \section{Proof of Theorem \ref{GAS_riskstructure_thm}}
% \label{GAS_Section_riskstructuredmodel}
% Theorem \ref{GAS_riskstructure_thm} extends Theorem \ref{las_basicmodel} ensuring global asymptotic stability of the disease-free equilibrium for the special case of \eqref{basicmodel} where $\delta_{msm}=\delta_{hetf_h}=\delta_{hetf_l}=\delta_{hetm}=0$.
% We denote the control reproduction number of this model by $\mathcal{\widehat{\mathbb R}}_{c}$, i.e.~it holds
% ${\widehat{{\mathbb R}}}_{c}={\mathbb R}_{c}|_{\delta_{msm}=\delta_{hetf_h}=\delta_{hetf_l}=\delta_{hetm}=0}.$
% We begin by restating Theorem \ref{GAS_riskstructure_thm}:

\begin{thm}\label{GAS_thm}
The disease-free equilibrium ${\mathcal E}_0$ of the special case of the basic model \eqref{basicmodel} with with $\delta_{msm}=\delta_{hetf_h}=\delta_{hetf_l}=\delta_{hetm}=0$ is globally-asymptotically stable in $\Omega^*$ whenever $\mathcal{\widehat{\mathbb R}}_{c} < 1$.
\end{thm}
\noindent The epidemiological implication of this theorem is that, for the special case of the model with no disease-induced mortality, bringing (and maintaining) the associated reproduction number to a value below one is necessary and sufficient for elimination of HIV in the community. That is, for this special case, administering PrEP (to the risk populations) at the coverage levels that can bring (and maintain) the reproduction number below one can lead to the EHE 2030 objective in the United States.\\
Before proceeding with the proof of Theorem \ref{GAS_thm}, we need the following intermediate result. 
\begin{lem}\label{prop:omega_star}
Consider the special case of the model \eqref{basicmodel} with $\delta_{msm} = \delta_{hetf_h} = \delta_{hetf_l} = \delta_{hetm} = 0$ with non-negative initial conditions. The region $\Omega^{*}$ \eqref{invariantRegion} is positively-invariant and attracts all solutions of the model under these conditions. % \eqref{basicmodel}. 
\end{lem}
\begin{proof} 
 Since all state variables are nonnegative, it follows from the first equation of the basic model \eqref{basicmodel} that:
\begin{eqnarray*}
\dfrac{dS_{msm}}{dt} &\leq&\Pi_{msm} - \mu S_{msm} = \mu \left(\dfrac{\Pi_{msm}}{\mu} -  S_{msm} \right) = \mu (S_{msm}^{*} - S_{msm}).
\end{eqnarray*}
\noindent Hence, if $S_{msm}(t) > S_{msm}^{*}$, then $\dfrac{dS_{msm}(t)}{dt}$ is negative. Thus, $S_{msm}(t) \leq S_{msm}^{*}$, for all $t$, provided that $S_{msm}(0) \leq S_{msm}^{*}$. Using similar approach for the third, fifth, and seventh equations of the model \eqref{basicmodel} we get the following bounds:
\begin{align*}
& \dfrac{dS_{hetf_h}}{dt} \leq \mu (S_{hetf_h}^{*} - S_{hetf_h}), \\
& \dfrac{dS_{hetf_l}}{dt} \leq \mu (S_{hetf_l}^{*} - S_{hetf_l}), \\
& \dfrac{dS_{hetm}}{dt} \leq \mu (S_{hetm}^{*}- S_{hetm}),
\end{align*}
respectively. 
%\textcolor{red}{DO THE STEPS UP TO HERE ESTABLISH ATTRACTIVITY?} \newline \textcolor{red}{NEXT ISSUE: I DO NOT UNDERSTAND HOW WE CONCLUDE E.G. THAT IF $S_{hetf_h}(0) \leq S_{hetf_h}^{**}$, THEN $S_{hetf_h}(t) \leq S_{hetf_h}^{**}$ for all $t$. DO WE NEED A COMPARISON THM?}
Furthermore, we have $S_{hetf_h}(t) \leq S_{hetf_h}^{*}$ for all $t$, provided that $S_{hetf_h}(0) \leq S_{hetf_h}^{*}$, $S_{hetf_l}(t) \leq S_{hetf_l}^{*}$ for all $t$, provided that $S_{hetf_l}(0) \leq S_{hetf_l}^{*}$, and $S_{hetm}(t) \leq S_{hetm}^{*}$ for all $t$, provided that $S_{hetm}(0) \leq S_{hetm}^{*}$. It follows from these bounds that:
\begin{align*}
\Omega^{*} =\Big\{ & \left(S_{msm}, I_{msm}, S_{hetf_h}, I_{hetf_h}, S_{hetf_l}, I_{hetf_l}, S_{hetm} , I_{hetm} \right) \in \Omega : \\ 
& S_{msm} \leq S_{msm}^{*},\, S_{hetf_h} \leq S_{hetf_h}^{*},\, S_{hetf_l} \leq S_{hetf_l}^{*}, \, S_{hetm} \leq S_{hetm}^{*}\Big\}
\end{align*}
is also positively-invariant and attracts all initial solutions in $\Omega^*$. 
\end{proof}
% {(\color{red} Alex: the equations and theorems in the Supplementary Material are not aligned with the main document.... their citations here shows question marks)}

\subsection{Proof of Theorem \ref{GAS_thm}}
\begin{proof} 
\noindent %We assume that ${\widehat{{\mathbb R}}}_{c} < 1$. 
Consider the special case of the model \eqref{basicmodel}, with $\delta_{msm} = \delta_{hetf_h} = \delta_{hetf_l} = \delta_{hetm} = 0$. We further assume that $\mathcal{\widehat{\mathbb R}}_{c} < 1$. The proof is based on using a
comparison theorem \cite{lakshmikantham1969differential_}.
The equations for the infected compartments can thus be rewritten in terms of the associated next-generation matrices
% Since, as established in Theorem \ref{prop:omega_star}, $\Omega^{*}$ is positively-invariant and attracts all solutions of the special case of \eqref{basicmodel}, the equations for the infected compartments can be re-written in terms of the next generation matrices given in \eqref{ngm_F_riskmodelMainText}, 
denoted by $F$ and $\widehat{V}$, (where $F$ is as defined in \eqref{ngm_F_riskmodelMainText} and $\widehat{V}|{V_{{=\delta_{msm}=\delta_{hetf_h}=\delta_{hetf_l}=\delta_{hetm}=0}}}$):
\begin{equation}
\frac{d}{dt}\begin{bmatrix}
I_{msm}\\
I_{hetf_h}\\
I_{hetf_l}\\
I_{hetm}\\
\end{bmatrix}
=
(F- \widehat{V})\begin{bmatrix}
I_{msm}\\
I_{hetf_h}\\
I_{hetf_l}\\
I_{hetm}\\
\end{bmatrix}
-
M \begin{bmatrix}
I_{msm}\\
I_{hetf_h}\\
I_{hetf_l}\\
I_{hetm}\\
\end{bmatrix},
\label{comparison_theorem_riskstructuredmodelApn}
\end{equation}
where the matrix $M$ is defined as:
\begin{equation*}\label{ngm_M_risk}
M =  \begin{bmatrix}
m_{11} & m_{12} & m_{13} & 0\\
m_{21} &  0 & 0 & m_{24}\\
m_{31} &  0 & 0 & m_{34}\\
0 &  m_{42} & m_{43} & 0\\
\end{bmatrix},
\end{equation*}
with 
\begin{eqnarray*}
m_{11} &=& (S_{msm}^* - S_{msm})a_{msm}\,(1-\varepsilon_{msm})\, \eta_{msm}\,\beta_{M}^{M}, \\
m_{12} &=& (S_{msm}^* - S_{msm})a_{msm}\,(1-\varepsilon_{msm})\, \eta_{hetf_h}\,\beta_{F}^{M},\ \\
m_{13} &=& (S_{msm}^* - S_{msm})a_{msm}\,(1-\varepsilon_{msm}) \left(1-\eta_{msm} - \eta_{hetf_h}\right)\,\beta_{F}^{M},\ \\
m_{21} &=& (S_{hetf_h}^* - S_{hetf_h})a_{hetf_h}\,(1-\varepsilon_{hetf_h})\,\alpha_{hetf_h}\,\beta_{M}^{F},\ \\m_{24} &=& (S_{hetf_l}^* - S_{hetf_l})a_{hetf_h}\,(1-\varepsilon_{hetf_h}) (1-\alpha_{hetf_h})\,\beta_{M}^{F},\ \\
m_{31} &=& (S_{hetf_l}^* - S_{hetf_l})a_{hetf_l}\,(1-\varepsilon_{hetf_l})\,\alpha_{hetf_l}\,\beta_{M}^{F},\ \\ m_{34} &=& (S_{hetf_l}^* - S_{hetf_l})a_{hetf_l}\,(1-\varepsilon_{hetf_l}) (1-\alpha_{hetf_l})\,\beta_{M}^{F},\\
m_{42} &= & (S_{hetm}^* - S_{hetm})a_{hetm}\,(1-\varepsilon_{hetm})\,\gamma_h\,\beta_{F}^{M},\ \\
m_{43} &=& (S_{hetm}^* - S_{hetm})a_{hetm}\,(1-\varepsilon_{hetm})(1-\gamma_h)\,\beta_{F}^{M}.
\end{eqnarray*}
Since ${S_{msm}} \leq S_{msm}^{*}$, ${S_{hetf_h}} \leq S_{hetf_h}^{*}$, ${S_{hetf_l}} \leq S_{hetf_l}^{*}$ and ${S_{hetm}} \leq S_{hetm}^{*}$  for all $t > 0$, it follows that the matrix $M$ is non-negative. Hence, from \eqref{comparison_theorem_riskstructuredmodelApn} the following inequality holds
\begin{equation}\label{compThm_Apn}
\dfrac{d}{dt}\begin{bmatrix}
I_{msm}\\
I_{hetf_h}\\
I_{hetf_l}\\
I_{hetm}\\
\end{bmatrix}
\leq
(F- \widehat{V})\begin{bmatrix}
I_{msm}\\
I_{hetf_h}\\
I_{hetf_l}\\
I_{hetm}\\
\end{bmatrix}.
\end{equation}
It should be recalled from the local asymptotic stability result for the disease-free equilibrium of the model (see Theorem \ref{las_basicmodel}) that all eigenvalues of the associated next-generation matrix $FV^{-1}$ are negative if  ${\mathbb R}_{c} < 1$ (i.e., $F - V$ is a stable matrix). It follows that the eigenvalues of the next-generation matrix $F\widehat{V}^{-1}$, associated with
this special case of the model, are also negative if ${\widehat{\mathbb R}}_{c} < 1$  (i.e., $F - \widehat{V}$ is also a stable matrix). It follows that the linearized differential inequality system \eqref{compThm_Apn} is stable whenever $\rho(F \widehat{V}^{-1})<1$. We can then conclude (see \cite{gumel2006mathematical_,ngonghala2023unraveling_,safdar2023mathematical_,safdar2022mathematical_,tollett2024dynamics_}) that 
\begin{equation}\label{zeroLim}(I_{msm}(t),
I_{hetf_h}(t),
I_{hetf_l}(t),
I_{hetm}(t)) \rightarrow (0, 0, 0, 0),\,{\rm as}\,\,t \rightarrow \infty.\end{equation}
Recalling that $N_j = S_j + I_j$:
\begin{equation}
    \lim_{t \to \infty } N_j = \lim_{t \to \infty } S_j + \lim_{t \to \infty} I_j.
\end{equation}
However, from \eqref{popLimit} and \eqref{zeroLim}:
\begin{equation}
    \lim_{t \to \infty} S_j = \dfrac{ \Pi_j}{\mu} = S_j^*.
\end{equation}
Thus, the disease-free equilibrium ${\mathcal E}_{0}$ of the special case of the model\eqref{basicmodel}, with $\delta_{msm}=\delta_{hetf_h}=\delta_{hetf_l}=\delta_{hetm}=0$ is globally-asymptotically stable in $\Omega^{*}$ whenever ${\widehat{\mathbb R}}_{c} < 1$. 
\end{proof}

\end{document}